\newtheorem{theorem}{Theorem}
\newtheorem{proposition}{Proposition}
\newtheorem{lemma}{Lemma}
\theoremstyle{definition}
\newcommand{\bdm}{\begin{displaymath}}
\newcommand{\edm}{\end{displaymath}}
\newcommand{\bq}{\begin{equation}}
\newcommand{\eq}{\end{equation}}
\newcommand{\bqn}{\begin{equation*}}
\newcommand{\eqn}{\end{equation*}}
\newcommand{\rn}{\mathbb{R}^n}
\newcommand{\norm}[1]{\left\| #1 \right\|}
\newcommand{\mklm}[1]{\left\{ #1 \right\}}
\renewcommand{\d}{\,d}
\newcommand{\N}{{\mathbb N}}
\newcommand{\C}{{\mathbb C}}
\newcommand{\R}{{\mathbb R}}
\newcommand{\D}{{\mathcal D}}
\newcommand{\E}{{\mathcal E}}
\newcommand{\F}{{\mathcal F}}
\newcommand{\T}{{\mathcal T}}
\newcommand{\1}{{\bf 1}}
\renewcommand{\epsilon}{\varepsilon}
\renewcommand{\phi}{\varphi}
\renewcommand{\rho}{\varrho}
\newcommand{\Cinft}{{\rm C^{\infty}}}
\newcommand{\CT}{{\rm C^{\infty}_c}}
\renewcommand{\L}{{\rm L}}
\newcommand{\Lcal}{{\mathcal L}}
\newcommand{\Sym}{{\rm S}}
\newcommand{\GL}{\mathrm{GL}}
\newcommand{\g}{{\bf \mathfrak g}}
\renewcommand{\det}{\mathrm{det}\,}
\renewcommand{\Im}{\mathrm{Im}\,}
\newcommand{\vol}{\mathrm{vol}\,}
\newcommand{\Crit}{\mathrm{Crit}}
\DeclareMathOperator{\supp}{supp}
\DeclareMathOperator{\tr}{tr}
\DeclareMathOperator{\gd}{\partial}
\newcommand{\e}[1]{\,{\mathrm e}^{#1}\,}
\newcommand{\dbar}{{\,\raisebox{-.1ex}{\={}}\!\!\!\!d}}
\begin{document} 


\author{Pablo Ramacher}
\title{Equivariant Lefschetz formulae and heat asymptotics} 
\address{Pablo Ramacher, Philipps-Universit\"at Marburg, Fachbereich Mathematik und Informatik, Hans-Meer\-wein-Str., 35032 Marburg, Germany}
\email{ramacher@mathematik.uni-marburg.de}
\thanks{This work was supported by the DFG grant RA 1370/2-1}.
\date{August 10, 2011}

\begin{abstract}
We prove an equivariant Lefschetz formula for elliptic complexes over a compact manifold carrying the action of a compact Lie group of isometries  via  heat equation methods. 
\end{abstract}

\maketitle

\setcounter{tocdepth}{1}
\tableofcontents

\section{Introduction}

  The computation of the Lefschetz number of an endomorphism of an elliptic complex constitutes a generalization of the index problem for an elliptic operator. For geometric endomorphisms arising from transversal mappings, this computation was accomplished by Atiyah and Bott  in \cite{atiyah-bott67}, generalizing the classical Lefschetz fixed point theorem.  In this paper, we shall prove a local formula for the equivariant Lefschetz number of an elliptic complex over a compact manifold carrying the action of a compact Lie group of isometries.

To explain our result, let $M$  be a compact Riemannian manifold of dimension $n$, and $G$ a compact Lie group acting effectively and isometrically on $M$. Consider further a family  $E_0, \dots, E_N$ of $\Cinft$-vector bundles over $M$, and let
\bqn
\Cinft(E_0)\stackrel{P_0}{\longrightarrow} \Cinft(E_1) \stackrel{P_1}{\longrightarrow} \dots \stackrel{P_{N-1}}{\longrightarrow} \Cinft(E_N)
\eqn
be an elliptic complex $\E$ on $M$.  Assume that for every $g \in G$ and $0 \leq j \leq N$ there exist smooth bundle homomorphisms $\Phi_j(g): g^\ast E_j \rightarrow E_j$, and  define the linear maps 
\bqn 
T_j(g):\Cinft(E_j)\stackrel{g^\ast}{\longrightarrow} \Cinft(g^\ast E_j)\stackrel{\Phi_j(g)}{\longrightarrow} \Cinft(E_j), \quad T_j(g) s(x) = \Phi_j(g) [  s(gx)].
\eqn
If $P_j T_j(g)=T_{j+1}(g) P_j$ for each $g \in G$ and $0 \leq j \leq N-1$, the maps $T_j(g)$ constitute a geometric endomorphism $T(g)$ of the complex $\E$, and we denote the corresponding endomorphisms on the cohomology groups $H^j(\E,\C)$  by  $\T_j(g)$. Since the cohomology groups are finite--dimensional, one can define the {Lefschetz number}  for each of the $T(g)$ by 
\bqn 
L(T(g))=\sum_{j=0}^N (-1)^{j} \tr {\T_j(g)}_{|H^j(\E,\C)}, \qquad g \in G.
\eqn
In case  that $g:M \rightarrow M$ has only simple fixed points,  the Lefschetz fixed point theorem of Atiyah and Bott  expresses $L(T(g))$ as a sum over  fixed points of $g$.   Consider now a unitary irreducible representation $(\pi_\rho,V_\rho)$ of $G$ associated to a character $\rho \in \hat G$, and write $T=\mklm{T(g)}_{g \in G}$.  We  then define the  $\rho$-equivariant Lefschetz number of $T$ as
\bqn 
\Lcal_\rho(T)= \frac 1 {\vol G} \int_G L(T(g)) \overline{\rho(g)} \d_G(g),
\eqn
where $d_G$ denotes a Haar measure on $G$. 
Note that if $G$ is trivial, this simply reduces to the Euler--Poincar\'e characteristic of $\E$. Our aim is to  prove a local  formula for $\Lcal_\rho(T)$,  based on  asymptotics of the heat equation. For this we shall approximate the heat operator by pseudodifferential operators. This leads to the problem of determining the asymptotic behavior of certain oscillatory integrals which have been examined before in \cite{ramacher10} during the study  of the spectrum of an invariant elliptic operator. The existence of such local formulae  for  $\Lcal_\rho(T)$ suggests that,  using invariance theory, it should be possible to find global expressions for  $\Lcal_\rho(T)$  in terms of characteristic classes. This will be the subject of   a subsequent paper. 

The original proof of Atiyah and Bott  of the Lefschetz fixed point theorem relies on the theory of pseudodifferential operators, and an extension of the trace of a finite rank operator to a larger class of maps including geometric endomorphisms. They also gave an alternative proof   based on  work of Seeley \cite{seeley67} on the zeta--function of an elliptic operator,  going back to work of Minakshisundaram and Pleijel \cite{minakshisundaram-pleijel}. It was first pointed out by H\"{o}rmander that heat equation techniques can be used instead of fractional powers to obtain a local formula for the Lefschetz number of a geometric endomorphism.  Following this approach,  Kotake gave another proof of the Atiyah--Bott fixed point theorem in \cite{kotake69}. 
Based on work of  McKean and Singer \cite{mckean-singer67}, Patodi \cite{patodi71} and Gilkey \cite{gilkey73}, this development finally culminated in a proof of the index theorem by Atiyah, Bott and Patodi using heat equation methods \cite{atiyah-bott-patodi73}. These methods were then applied to derive generalized Lefschetz fixed point formulae for the classical complexes. In the case of the signature complex,  Donnelly-Patodi \cite{donnelly-patodi77} and Kawasaki \cite{kawasaki78} gave a new proof of the $G$-signature theorem of Atiyah--Singer, while the other  complexes were treated in Gilkey \cite{gilkey79}.

 The paper is structured as follows. In Section \ref{sec:AB} we review the Lefschetz formula of Atiyah and Bott for elliptic complexes, and state the main result of this paper. Section \ref{sec:HELN} introduces the heat equation, and explains how it is related to the index problem. The crucial observation here, which is due to Bott,  is that the Lefschetz number of an elliptic complex can be expressed as an alternating sum of heat traces. In Section \ref{sec:SEHA} the heat operator is approximated by pseudodifferential operators, obtaining an expansion for the equivariant heat trace and for $\Lcal_\rho(T)$ in terms of oscillatory integrals. Their asymptotic behavior is described in Section \ref{sec:SEARS} using the stationary phase theorem and resolution of singularities. A local formula for $\Lcal_\rho(T)$ in then derived in Section \ref{sec:LF}, while an outlook is given in Section \ref{sec:O}.

\section{Equivariant Lefschetz formulae for elliptic complexes}
\label{sec:AB}

We begin by reviewing the classical Atiyah--Bott fixed point formula for elliptic complexes following \cite{atiyah-bott67}. Let $M$ be a compact $\Cinft$-manifold of dimension $n$, and $E$ and $F$ complex vector bundles over $M$. Denote the corresponding spaces of smooth sections by $\Cinft(E)$ and $\Cinft(F)$, respectively, and consider a differential operator
\bqn 
P: \Cinft(E) \longrightarrow \Cinft (F)
\eqn
of order $d$ between $E$ and $F$, which is a linear map given locally by a matrix of partial differential operators with smooth coefficients. Let $T^\ast M$ be the cotangent bundle of $M$, and $\pi: T^\ast M \rightarrow M$  the canonical projection. The terms of order $d$ of $D$ define in an invariant manner a bundle homomorphism
\bqn 
p_d:\pi^\ast E \longrightarrow \pi^\ast F
\eqn
over the cotangent space $T^\ast M$ called the \emph{principal symbol} of $D$.  If $p_d$ is an isomorphism away from the zero section of $T^\ast M$, the operator $D$ is called \emph{elliptic}. Let now $E_0, \dots, E_N$ be a family of $\Cinft$-vector bundles over $M$. Then a sequence 
\bq
\label{eq:a}
\Cinft(E_0)\stackrel{P_0}{\longrightarrow} \Cinft(E_1) \stackrel{P_1}{\longrightarrow} \dots \stackrel{P_{N-1}}{\longrightarrow} \Cinft(E_N)
\eq
of differential operators is called an \emph{elliptic complex} if $P_j P_{j-1}=0$ for all $1 \leq j \leq N-1$, and the sequence of corresponding principal symbols
\bqn 
0 \longrightarrow  \pi^\ast E_0 \stackrel{p_{0,d_0}}{\longrightarrow}\pi^\ast E_1 \stackrel{p_{1,d_1}}{\longrightarrow}  \dots \stackrel{p_{N-1,d_{N-1}}}{\longrightarrow} \pi^\ast E_N \longrightarrow 0
\eqn
is exact outside the zero section. The complex \eqref{eq:a} is denoted by $\E$, and its cohomology groups are  defined as usual according to 
\bqn 
H^j(\E,\C) = \ker P_j/\Im P_{j-1}.
\eqn
As one can show, these cohomology groups are all finite--dimensional for an elliptic complex. In particular, the Euler--Poincar\'{e} characteristic
\bqn 
\chi(\E)= \sum_{j=0}^N (-1)^j \dim H^j(\E,\C) 
\eqn
is well defined.

Consider next an \emph{endomorphism} $T$ of the  elliptic complex \eqref{eq:a}, by  which one means a sequence of linear maps $T_j:\Cinft(E_j) \rightarrow \Cinft(E_j)$ such that $P_jT_j =T_{j+1} P_j$. Since both $\ker P_j$ and $\Im P_{j-1}$ are left invariant by $T_j$, such an endomorphism  induces endomorphisms $\T_j$ on the cohomology groups $H^j(\E,\C)$. Since the latter are finite--dimensional, one can define the \emph{Lefschetz number}  of $T$ as
\bqn 
L(T)=\sum_{j=0}^N (-1)^{j} \tr {\T_j}_{|H^j(\E,\C)}.
\eqn
Clearly, if $T$ is the identity, $L(T)$ just reduces to the Euler-Poincar\'e characteristic $\chi(\E)$ of the complex $\E$. In  case that $N=1$, $\chi(\E)$ is just the index of the elliptic operator $P_0$. Therefore, the  computation of the Lefschetz number $L(T)$ constitutes  a generalization of the index problem for an elliptic operator, which was solved by Atiyah--Singer in \cite{atiyah-singer63}.

Let now  $g:M \rightarrow M$ be a smooth map, so that for each $0\leq j\leq N$ we have the induced bundles $g^\ast E_j$ over $M$, together with the linear maps 
\bqn 
g^\ast:\Cinft(E_j)\longrightarrow \Cinft(g^\ast E_j), \quad (g^\ast s)(x) = s(gx).
\eqn
In addition, assume that we are given smooth bundle homomorphisms $\Phi_j(g): g^\ast E_j \rightarrow E_j$. We can then define the linear maps 
\bq
\label{eq:b}
T_j(g):\Cinft(E_j)\stackrel{g^\ast}{\longrightarrow} \Cinft(g^\ast E_j)\stackrel{\Phi_j(g)}{\longrightarrow} \Cinft(E_j), \quad T_j s(x) = \Phi_j(g) [  s(gx)].
\eq
If  $P_j T_j(g)=T_{j+1}(g) P_j$, the system consisting of $g$ and the linear maps $T_j(g):\Cinft(E_j) \rightarrow \Cinft(E_j)$ is called a \emph{geometric endomorphism} of $\E$. If $g$ has only \emph{simple} fixed points, meaning that $\det (\1 - dg_x)\not=0$ for each fixed point $x\in M$, the mapping $g$ is called \emph{transversal}. In this case, each fixed point is isolated so that, $M$ being compact, the set of fixed points $\mathrm{Fix}(g)$ of $g$ is finite.  Note that at a fixed point $x \in \mathrm{Fix} (g)$, $\Phi_{j}(g)_x$ is an endomorphism of the fiber $E_{j,x}$, so that its trace $\tr \Phi_{j}(g)_x$ is defined. After these preparations, we can state
\begin{theorem}[Atiyah--Bott--Lefschetz fixed point theorem] 
\label{thm:AB}
Consider a geometric endomorphism $T(g)$ of an elliptic complex \eqref{eq:a}, given by a transversal mapping $g:M\rightarrow M$, and bundle homomorphisms $\Phi_j(g):g^\ast E_j \rightarrow E_j$. 
Then the Lefschetz number $L(T(g))$ of $T(g)$ is given by 
\bqn
L(T(g))=\sum_{x \in \mathrm{Fix}(g)} \sum_{j=0}^N \frac{(-1)^{j} \tr \Phi_{j}(g)_x}{|\det (\1 -dg_x)|}.
\eqn
\end{theorem}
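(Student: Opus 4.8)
The plan is to follow the heat-equation strategy that will be developed in the later sections of the paper, but to explain here the classical route as a model for what the equivariant argument must reproduce. The starting point is Bott's observation that the Lefschetz number can be written as an alternating sum of traces of the geometric endomorphism composed with heat operators. Concretely, one introduces the Laplacians $\Delta_j = P_j^\ast P_j + P_{j-1}P_{j-1}^\ast$ attached to the complex with respect to auxiliary Hermitian metrics, forms the heat operators $\e{-t\Delta_j}$, and observes that $T_j(g)$ commutes with $\Delta_j$ up to lower-order terms in a way that still allows the McKean--Singer cancellation to go through. The key identity to establish first is
\bqn
L(T(g)) = \sum_{j=0}^N (-1)^j \tr \bigl( T_j(g)\,\e{-t\Delta_j} \bigr), \qquad t>0,
\eqn
which holds because the contributions of the positive eigenspaces of the $\Delta_j$ cancel in pairs under $P_j$, leaving only the harmonic parts, and on harmonic forms $T_j(g)$ induces exactly $\T_j(g)$ on cohomology via the Hodge isomorphism.

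Next I would pass to the $t \downarrow 0$ asymptotics of each heat trace. Writing $K_j(t,x,y)$ for the Schwartz kernel of $\e{-t\Delta_j}$, the trace becomes $\int_M \tr\bigl(\Phi_j(g)_x K_j(t,gx,x)\bigr)\d x$. Because $g$ is transversal, the only points contributing to the small-$t$ limit are the fixed points of $g$: away from $\mathrm{Fix}(g)$ the kernel $K_j(t,gx,x)$ is $O(t^\infty)$ uniformly, since $\dist(gx,x)$ is bounded below. Near an isolated fixed point $x_0$ one uses the standard parametrix construction for the heat kernel together with a stationary-phase (Gaussian-integral) evaluation: in local coordinates centered at $x_0$, the leading behavior of $K_j(t,gx,x)$ is governed by $\e{-\modulus{x-gx}^2/4t}$, and integrating the Gaussian $\int_{\rn}\e{-\modulus{(\1-dg_{x_0})x}^2/4t}\d x$ produces the factor $(4\pi t)^{n/2}/\modulus{\det(\1-dg_{x_0})}$, which cancels the $(4\pi t)^{-n/2}$ from the normalization of the heat kernel and leaves the finite limit $\tr\Phi_j(g)_{x_0}/\modulus{\det(\1-dg_{x_0})}$.

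Summing over $j$ with signs and over the finitely many fixed points then yields the claimed formula, the point being that all the metric-dependent and $t$-dependent contributions are killed either by the alternating sum (the non-leading local heat-kernel coefficients enter each $\Delta_j$ symmetrically enough, via the symbol exactness of the complex outside the zero section) or by taking $t\to 0$. The main obstacle, and the place where care is genuinely needed, is the uniformity in the parametrix estimates: one must control the remainder in the heat-kernel expansion uniformly on $M$, and in particular ensure that the off-diagonal decay near $\mathrm{Fix}(g)$ and the Gaussian localization are compatible, so that interchanging $\lim_{t\to 0}$ with $\int_M$ and with $\sum_j$ is justified. This is exactly the analytic input that the pseudodifferential approximation of the heat operator in Section~\ref{sec:SEHA}, combined with the stationary phase theorem of Section~\ref{sec:SEARS}, is designed to supply, and the equivariant refinement simply carries the extra integration $\frac{1}{\vol G}\int_G(\cdot)\overline{\rho(g)}\d_G(g)$ through the same estimates.
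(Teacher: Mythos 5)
Your proposal is correct in outline, but it takes the heat--equation route to Theorem~\ref{thm:AB}, whereas the paper gives no proof beyond the citation to Atiyah--Bott, whose original argument proceeds quite differently: one extends the trace functional from finite--rank operators to a class of operators whose Schwartz kernels are smooth near the graph of $g$, evaluates this extended trace on $T(g)$ composed with a parametrix of the complex, and localizes at the transversal fixed points --- no heat operator appears. Your route is essentially Kotake's proof, which the paper explicitly acknowledges in the introduction, and whose ingredients (Lemma~\ref{lemma:1}, the parametrix $E_K(t,\Delta_j)$ of Section~\ref{sec:SEHA}, stationary phase) the paper then redeploys for the genuinely new equivariant statement, Theorem~\ref{thm:main}; what the heat--equation route buys is exactly this reusability for non-isolated fixed point sets, at the cost of introducing the auxiliary metrics and Laplacians. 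Two small corrections to your sketch. First, the identity $L(T(g))=\sum_j(-1)^j\tr\big(T_j(g)\e{-t\Delta_j}\big)$ does not require $T_j(g)$ to commute with $\Delta_j$, even approximately; it uses only the intertwining relations $P_jT_j(g)=T_{j+1}(g)P_j$ together with the exactness of the complexes formed by the nonzero eigenspaces of the $\Delta_j$ --- this is precisely Lemma~\ref{lemma:1}, and no approximate commutation enters. Second, the Gaussian $\e{-\modulus{x-gx}^2/4t}$ is the leading kernel only when the $P_j$ have order one, so that $\Delta_j$ is a second--order Laplacian; for the general case one should instead run the substitution $u\mapsto(\1-dg_{x_0})^{-1}u$ on the parametrix representation $K_j(t,y,z)\approx\int \e{i(\kappa(y)-\kappa(z))\cdot\xi}\,e_0^\gamma(t,\kappa(y),\xi,\Delta_j)\dbar\xi$, which produces the Jacobian factor $\modulus{\det(\1-dg_{x_0})}^{-1}$ and, after rescaling $\xi$ by $t^{1/m}$, a finite $t$-independent limit equal to $\tr\Phi_j(g)_{x_0}$ times that factor. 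With these adjustments, and with the uniform off-diagonal and remainder estimates that you correctly identify as the remaining analytic work, the sketch closes.
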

\begin{proof}
See Atiyah--Bott \cite{atiyah-bott67}.
\end{proof}

 The classical example for an elliptic complex is the De--Rham complex. In this case, the $j^{th}$ exterior powers of $dg$  yield a geometric endomorphism, and the above theorem reduces to the classical Lefschetz fixed point formula.

Consider now a compact Lie group $G$, acting effectively and isometrically on $M$. Let us assume that for every $g \in G$ and $0 \leq j \leq N$ there exist smooth bundle homomorphisms $\Phi_j(g): g^\ast E_j \rightarrow E_j$, so that we can define the linear maps 
\eqref{eq:b}. In addition, we shall  assume that  $P_j T_j(g)=T_{j+1}(g) P_j$ for each $g \in G$ and $0 \leq j \leq N-1$. Under these conditions, the mappings $T_j(g)$ define  geometric endomorphisms  $T(g)$ of $\E$ for each $g \in G$,  and we write $T=\mklm{T(g)}_{g \in G}$. 
The  Lefschetz number  of $T(g)$ is  given by
\bq
\label{eq:c}
L(T(g))=\sum_{j=0}^N (-1)^{j} \tr {\T_j(g)}_{|H^j(\E,\C)},
\eq
where the $\T_j(g)$ denote the endomorphisms induced by the  maps $T_j(g)$ on the cohomology groups $H^j(\E,\C)$. In what follows, we shall consider the following generalization of the Euler-Poincar\'e characteristic of $\E$. Let $(\pi_\rho,V_\rho)$ be a unitary irreducible representation  of $G$ associated to the character $\rho \in \hat G$. We then define the \emph{$\rho$-equivariant Lefschetz number} of $T$ as
\bq
\label{eq:d}
\Lcal_\rho(T)= \frac 1 {\vol G} \int_G L(T(g)) \overline{\rho(g)} \d_G(g),
\eq
where $d_G$ is a Haar measure on $G$. Clearly, if $G$ is trivial, this just reduces to $\chi(\E)$. The main result of this paper is the following local formula for $\Lcal_\rho(T)$.

\begin{theorem}
\label{thm:main}
Let $M$ be a compact Riemannian manifold of dimension $n$, and $G$ a compact Lie group acting effectively and isometrically on $M$. Let $\mathbb{J}: T^\ast M\rightarrow \g^\ast$ be the momentum map of the induced  Hamiltonian action on the cotangent bundle $T^\ast M$, and put $\Xi=\mathbb{J}^{-1}(0)$. Consider further  an elliptic complex $\E$  on $M$, together with  a family of geometric endomorphisms $T=\mklm{T(g)}_{g \in G}$ of $\E$ defined by the isometries $g: M \rightarrow M$, and bundle homomorphisms $\Phi_j(g):g^\ast E_j\rightarrow E_j$, and denote by  $\Delta_j$  the associated Laplacians.  Let $\mklm{(\kappa_\gamma,U^\gamma)}$ be an atlas of $M$, $\mklm{f_\gamma}$ a subordinated partition of unity, and $\{\phi^\gamma_{E_{j}}\}$  corresponding trivializations of the bundles $E_j$. 
\begin{enumerate}
\item For each $\rho \in \hat G$,  the $\rho$-equivariant Lefschetz number $\Lcal_\rho(T)$ of $T$ is given by the local formula
\begin{align*}
\Lcal_\rho(T)&= \frac { (2\pi)^{\kappa-n}} {\vol G} \sum_{j=0}^N (-1)^j    \sum_\gamma  \Big [ \mathcal{L}_{j,n-\kappa,\gamma} + \mathcal{R}_{j,\gamma} \Big ],
\end{align*}
where $\kappa$ is the dimension of a principal $G$-orbit in $M$, and 
\bqn
\mathcal{L}_{j,k,\gamma}=\int_{\mathrm{Reg}\, \mathcal{C}} \frac { f_\gamma(x)   \cdot \tr \Big [ \Phi_j(g)_{x} \circ (\phi_{E_j}^\gamma)^{-1}_{x} \circ e_k^\gamma(1,\kappa_\gamma(x),\eta,\Delta_j)  \circ (\phi_{E_j}^\gamma)_x \Big ] \cdot \overline{\rho(g)} }{|\det   \, \Phi_\gamma''(x, \eta,g)_{N_{(x, \eta, g)}\mathrm{Reg}\, \mathcal{C}}|^{1/2}} \d(\mathrm{Reg}\, \mathcal{C})(x,\eta,g).
\eqn
 $\mathrm{Reg}\, \mathcal{C}$ denotes the regular part of  the critical set $\mathcal{C}=\mklm{(x,\xi,g) \in \Xi \times G: g \cdot (x,\xi) =(x,\xi)}$ of the phase functions $\Phi_\gamma(x,\eta,g)=(\kappa_\gamma(gx) - \kappa_\gamma(x))\cdot \eta$, and $\d(\mathrm{Reg}\, \mathcal{C})$ the induced volume density.  The $ e_{k}^\gamma(1,\kappa_\gamma(x),\eta,\Delta_j)$,  where  $0 \leq k \leq n-\kappa$, are local symbols,  and  the  remainder terms $\mathcal{R}_{j,\gamma}$ are given in terms of   local symbols up to order  $n-\kappa-1$. 
\item If the endomorphisms $\Phi_j(g)_x$ act trivially on the fibers $E_{j,x}$, 
\begin{gather*}
\Lcal_\rho(T)= \frac { (2\pi)^{\kappa-n}[{\pi_\rho}_{|H}:1]} {\vol G} \sum_{j=0}^N    \sum_\gamma (-1)^j \\
\cdot \Big [ \int_{{\mathrm{Reg}} \, \Xi}  f_\gamma(x)   \cdot \tr \Big [  (\phi_{E_j}^\gamma)^{-1}_{x} \circ e_{n-\kappa}^\gamma(1,\kappa_\gamma(x),\eta,\Delta_j)  \circ (\phi_{E_j}^\gamma)_x \Big ]  \frac{d({\mathrm{Reg}}\, \Xi)(x, \eta)}{\vol\mathcal{O}_{(x,\eta)}} + \mathcal{R}_{j,\gamma} \Big ],
 \end{gather*}
where   $H\subset G$ a principal isotropy group,  and $ [{\pi_\rho}_{|H}:1]$  the multiplicity of the trivial representation in the restriction of $\pi_\rho$ to $H$, while $\mathcal{O}_{(x,\eta)}$ denotes the $G$-orbit in $T^\ast M$ through $(x,\eta)$. 
\end{enumerate}
\end{theorem}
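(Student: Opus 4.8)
The plan is to follow the heat-equation route announced in the introduction: I would realize $\Lcal_\rho(T)$ as the $t$-independent term in the small-time expansion of a $G$-averaged family of twisted heat traces, in three stages — a McKean--Singer/Bott reduction, a pseudodifferential approximation turning the twisted heat traces into oscillatory integrals with the phase functions $\Phi_\gamma$, and a stationary phase analysis on the singular critical set $\mathcal{C}$ carried out by resolution of singularities as in \cite{ramacher10}. For Step 1, fix $G$-invariant Hermitian metrics on $M$ and the $E_j$ (possible since $G$ is compact) and let $\Delta_j = P_j^\ast P_j + P_{j-1}P_{j-1}^\ast$ be the associated Laplacians, with heat semigroups $e^{-t\Delta_j}$. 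Using the intertwining relations $P_j T_j(g) = T_{j+1}(g)P_j$ and $\Delta_j P_{j-1} = P_{j-1}\Delta_{j-1}$ together with cyclicity of the trace, one checks that $\frac{d}{dt}\sum_{j=0}^N(-1)^j\tr\big(T_j(g)e^{-t\Delta_j}\big)$ telescopes to $0$, and letting $t\to\infty$ identifies the constant value with $\sum_j(-1)^j\tr({\T_j(g)}_{|H^j(\E,\C)}) = L(T(g))$. Hence
\bqn
\Lcal_\rho(T) = \frac{1}{\vol G}\int_G\sum_{j=0}^N(-1)^j\tr\big(T_j(g)e^{-t\Delta_j}\big)\,\overline{\rho(g)}\,\d_G(g)
\eqn
for every $t>0$, the left-hand side being independent of $t$.

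For Step 2, the kernel of $T_j(g)e^{-t\Delta_j}$ is $(x,y)\mapsto\Phi_j(g)_x K_j^t(gx,y)$, where $K_j^t$ is the heat kernel of $\Delta_j$, so $\tr\big(T_j(g)e^{-t\Delta_j}\big) = \int_M\tr_{E_{j,x}}\big[\Phi_j(g)_x K_j^t(gx,x)\big]\d x$. I would localize with $\{f_\gamma\}$, $\{\phi^\gamma_{E_j}\}$ and replace $K_j^t$ by a pseudodifferential heat parametrix whose symbol expands into homogeneous components; this gives, uniformly in $g$,
\bqn
\tr\big(T_j(g)e^{-t\Delta_j}\big) = (2\pi)^{-n}\sum_\gamma\int_{\R^n}\!\!\int_{U^\gamma}e^{i(\kappa_\gamma(gx)-\kappa_\gamma(x))\cdot\xi}\,a_j(t,x,\xi,g)\,f_\gamma(x)\,\d x\,\d\xi + O(t^\infty).
\eqn
Substituting $\xi = \eta/\sqrt{t}$ exhibits the large parameter $\mu = t^{-1/2}$, the phase $\mu\,\Phi_\gamma(x,\eta,g)$ with $\Phi_\gamma(x,\eta,g) = (\kappa_\gamma(gx)-\kappa_\gamma(x))\cdot\eta$, a factor $t^{-n/2}$, and an amplitude of Gaussian decay in $\eta$ whose homogeneous components carry powers $t^{k/2}$ and are, up to conjugation by $\phi^\gamma_{E_j}$, the local symbols $e_k^\gamma(1,\kappa_\gamma(x),\eta,\Delta_j)$ with the time variable normalized to $1$. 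Averaging over $G$ against $\overline{\rho}$ then produces, for each $j$ and $\gamma$, an oscillatory integral over $M\times\R^n\times G$ with phase $\mu\,\Phi_\gamma$.

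For Step 3, computing $d\Phi_\gamma$ in $(x,\eta,g)$: vanishing of $\partial_\eta\Phi_\gamma$ and $\partial_x\Phi_\gamma$ forces $g\cdot(x,\eta) = (x,\eta)$ in $T^\ast M$, and vanishing of the derivatives along $\g$ forces $\langle\eta,\widetilde X(x)\rangle = 0$ for all $X\in\g$, i.e.\ $\mathbb{J}(x,\eta) = 0$; so the critical set of $\Phi_\gamma$ on $\supp f_\gamma\times\R^n\times G$ is $\mathcal{C} = \mklm{(x,\eta,g)\in\Xi\times G:\ g\cdot(x,\eta) = (x,\eta)}$, off which the phase is non-stationary and the contribution is $O(t^\infty)$. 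Since the $G$-action need not be free, $\Xi$ and hence $\mathcal{C}$ is in general singular, so the classical stationary phase theorem does not apply directly; instead I would invoke the resolution-of-singularities machinery of \cite{ramacher10}, which permits a stationary phase expansion transversally to $\mathrm{Reg}\,\mathcal{C}$, of codimension $2\kappa$ in $M\times\R^n\times G$, the lower strata of $\mathcal{C}$ and the tails of the resolution contributing only to lower order. This produces the Hessian factor $|\det\Phi_\gamma''(x,\eta,g)_{N_{(x,\eta,g)}\mathrm{Reg}\,\mathcal{C}}|^{-1/2}$, the density $\d(\mathrm{Reg}\,\mathcal{C})$, a factor $(2\pi/\mu)^\kappa = (2\pi)^\kappa t^{\kappa/2}$, and an expansion of the $G$-average in powers $t^{(\kappa-n+k)/2}$, $k\ge0$, with coefficients integrals over $\mathrm{Reg}\,\mathcal{C}$ of the $e_k^\gamma$ and their derivatives against $f_\gamma\,\overline{\rho}$. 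As the left-hand side is $t$-independent, only the exponent $0$, i.e.\ $k = n-\kappa$, survives: the leading symbol $e_{n-\kappa}^\gamma$ yields $\mathcal{L}_{j,n-\kappa,\gamma}$ and the prefactor $(2\pi)^{\kappa-n}$, while the symbols $e_k^\gamma$ with $k\le n-\kappa-1$ enter through higher-order stationary phase corrections and assemble into $\mathcal{R}_{j,\gamma}$. This proves assertion (1).

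Finally, if each $\Phi_j(g)_x$ is the identity on $E_{j,x}$, the amplitude no longer depends on $g$ along the isotropy fibre of $\mathcal{C}\to\Xi$; over a principal point this fibre is a conjugate of $H$, and $\int_H\overline{\rho(g)}\,\d_G(g) = \vol H\cdot[{\pi_\rho}_{|H}:1]$ by Peter--Weyl. Using $\vol G = \vol H\cdot\vol\mathcal{O}_{(x,\eta)}$ then rewrites the integral over $\mathrm{Reg}\,\mathcal{C}$ as one over $\mathrm{Reg}\,\Xi$ against $\d(\mathrm{Reg}\,\Xi)/\vol\mathcal{O}_{(x,\eta)}$, with $[{\pi_\rho}_{|H}:1]$ pulled out, giving (2). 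The hard part will be Step 3: because $0$ need not be a regular value of $\mathbb{J}$, the set $\mathcal{C}$ is a genuinely singular variety, and making the transversal stationary phase expansion rigorous — controlling the singular strata, showing that precisely the $t^0$ coefficient takes the stated closed form, and identifying the remainder with local symbols of order at most $n-\kappa-1$ — is exactly what demands the resolution-of-singularities analysis of \cite{ramacher10}; a secondary technical point is the uniformity in $g$ of the parametrix remainder in Step 2.
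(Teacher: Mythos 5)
Your proposal follows essentially the same route as the paper: Bott's alternating-sum identity $L(T(g))=\sum_j(-1)^j\tr_{\L^2}(T_j(g)e^{-t\Delta_j})$, a pseudodifferential heat parametrix converting the averaged traces into oscillatory integrals with phase $\Phi_\gamma(x,\eta,g)=(\kappa_\gamma(gx)-\kappa_\gamma(x))\cdot\eta$, the singular stationary phase expansion of \cite{ramacher10} on $\mathrm{Reg}\,\mathcal{C}$, extraction of the $t$-independent term, and for part (2) the fibre-integration formula over the isotropy groups (which the paper imports as Lemma 7 of \cite{cassanas-ramacher09} rather than rederiving). The only cosmetic deviation is your normalization $\xi=\eta/\sqrt t$, which presumes second-order Laplacians where the paper scales by $t^{1/m}$ for general order $m$.
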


In contrast to the Atiyah--Bott fixed point theorem, higher dimensional fixed point sets are now involved. The proof of Theorem \ref{thm:main} will therefore require the more elaborate techniques of fractional powers and the heat equation, which were not needed in the original proof of Theorem \ref{thm:AB}. 

\section{Heat equation and Lefschetz numbers}
\label{sec:HELN}

Let $M$ be a closed $n$-dimensional Riemannian manifold, $dM$ its volume density, and $E$ a complex $\Cinft$- vector bundle over $M$ endowed with a smooth Hermitian metric $h$. Under these assumptions,  $\Cinft(E)$  becomes a Pre--Hilbert space with  inner product
\bqn 
(s,s')_{\L^2}= \int_M h(s(x),s'(x)) \d M,\qquad s, s' \in \Cinft(E).
\eqn
 Its completion is given by  the Hilbert space $\L^2(E)$ of square integrable sections of $E$. Denote by $\Omega$ the density bundle on $M$, which is the line bundle associated to the tangent bundle $TM$ via the representation $A \to |\det A|$ of $\GL(n,\R)$. Consider further $E^\ast$, the dual bundle of $E$, and set $E'=E^\ast \otimes \Omega$. Let $\Cinft(E')'$ be the dual topological vector space of $\Cinft(E')$. An element of  $\D'(E)=\Cinft(E')'$ is  called a \emph{distributional  section} of $E$. In general, if
\begin{equation*}
  A: \Cinft(E) \longrightarrow \Cinft(F) 
\end{equation*}
is a continuous linear operator, its Schwartz kernel $K_A$ is a distributional section on $M\times M$ of the bundle $F\boxtimes E'$. Here $F\boxtimes E'$ denotes the exterior tensor product of $F$ and $E'$, which is the smooth bundle over $M \times M$ with fibers $F_x\otimes E'_y$, $x, y \in M$.  Suppose now  that 
\bq
\label{eq:e}
P:\Cinft(E) \longrightarrow \L^2(E)
\eq
is  an elliptic differential operator of order $m$ on $E$, regarded as an operator in $\L^2(E)$ with domain $\Cinft(E)$, and assume that $P$ is symmetric and positive\footnote{The positivity of $P$ means that, outside the zero section of $T^\ast M$, the  principal symbol is given by a positive definite matrix.}. Then  $P$ has discrete spectrum, and there exists an orthonormal basis of $\L^2(E)$ consisting of smooth sections $\mklm{e_j}$ such that $Pe_j=\lambda_j e_j$, $|\lambda_j| \to \infty$.  Associated to $P$, we consider the heat equation
\bqn 
(\gd_t +P ) h(x,t)=0, \qquad \lim_{t \to 0} h(x,t) =f(x), \qquad t >0,
\eqn
with initial condition $f \in \Cinft(E)$. It is a parabolic differential equation, and its solution is given by $h(x,t) =e^{-tP} f(x)$, where 
\bq
\label{eq:f}
e^{-tP} = \frac 1 {2\pi i} \int_\Gamma e^{-t\lambda} (P-\lambda \1)^{-1} \d \lambda
\eq
is the corresponding heat operator. Here $\Gamma$ is a suitable path in $\C$ coming from infinity and going to infinity such that $(P-\lambda \1)$ is invertible for $\lambda \in \Gamma$. The heat operator has a smooth kernel $K_{e^{-tP}}\in \Cinft(E\boxtimes E')$, which for each $x,y \in M$ defines an element $ K(t,x,y,P)dM(y)  \in \mathrm{Hom} (E_y,E_x) \otimes \Omega_y$. As Seeley showed in \cite{seeley67}, $e^{-tP}$ is of $\L^2$-trace class,  its trace being given by
\bqn 
\tr_{\L^2} (e^{-tP}) =\sum_j (e^{-tP} e_j,e_j)_{\L^2}=\sum_j e^{-t\lambda_j}= \int _M \tr K(t,x,x,P) \d M(x).
\eqn
Let next $\E$ be an elliptic complex over $M$ as in \eqref{eq:a}, where each of the bundles $E_j$ is equipped with a smooth Hermitian metric.  For simplicity, we shall assume that all the $P_j$ have the same order. Consider the adjoint complex 
\bqn
\Cinft(E_0)\stackrel{P_0^\ast}{\longleftarrow} \Cinft(E_1) \stackrel{P_1^\ast }{\longleftarrow} \dots \stackrel{P_{N-1}^\ast}{\longleftarrow} \Cinft(E_N),
\eqn
where the $P_j^\ast$ are differential operators determined uniquely by the condition
$(P_j s, s')_{\L^2} = (s, P_j^\ast s')_{\L^2}$ for all $ s \in \Cinft(E_j)$, $s'\in \Cinft(E_{j+1})$, and  define the associated Laplacians
\bqn 
\Delta_j= P_{j-1} P_{j-1}^\ast + P_j^\ast P_j.
\eqn
Then $\Delta_j:\Cinft(E) \rightarrow \L^2(E)$ is an elliptic, symmetric and  positive operator. 

Suppose now that a compact Lie group $G$ acts effectively and isometrically on $M$, and  that for every $g \in G$ and $0 \leq j \leq N$ there exist smooth bundle homomorphisms $\Phi_j(g): g^\ast E_j \rightarrow E_j$, so that we can define the linear maps 
\eqref{eq:b}. Assume that  $P_j T_j(g)=T_{j+1}(g) P_j$ for each $g \in G$ and $0 \leq j \leq N-1$, and let $L(T(g))$ be the Lefschetz number  of the geometric endomorphism $T(g)$ defined in \eqref{eq:c}.   The following algebraic observation is due to Bott, and is a direct consequence of the Hodge decomposition theorem. It is crucial for the heat equation approach to the index problem. 
\begin{lemma}
\label{lemma:1}
Let $\E$ be an elliptic complex, and $\Delta_j$ the associated Laplacians. For each $g \in G$, let $T(g)$ be  a gemetric endomorphism  determined by the action of $g$ on $M$, and smooth bundle homomorphisms $\Phi_j(g)$. Then 
\bqn 
L(T(g))=\sum_{j=0}^N (-1)^j  \tr_{\L^2} \big (T_j(g) e^{-t\Delta_j}\big )
\eqn
for any $t>0$.
\end{lemma}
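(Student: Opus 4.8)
The plan is to exploit the Hodge decomposition for each Laplacian $\Delta_j$ together with the fact that $T(g)$ is a chain map, so that the supertrace of $T_j(g)e^{-t\Delta_j}$ localizes on harmonic sections and recovers the Lefschetz number. First I would recall that, since each $\Delta_j$ is elliptic, symmetric and positive, the space $\Cinft(E_j)$ (and its $\L^2$-completion) decomposes into finite-dimensional $\lambda$-eigenspaces $\Cinft(E_j)_\lambda$, and that each $\Cinft(E_j)_\lambda$ is preserved by both $P_j$ and $P_j^\ast$ because these commute with $\Delta_j$ (using $P_j\Delta_j = P_jP_j^\ast P_j = \Delta_{j+1}P_j$ and similarly for the adjoints). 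Hence for each fixed $\lambda$ one obtains a finite-dimensional subcomplex
\bqn
0 \longrightarrow \Cinft(E_0)_\lambda \stackrel{P_0}{\longrightarrow} \Cinft(E_1)_\lambda \stackrel{P_1}{\longrightarrow} \dots \stackrel{P_{N-1}}{\longrightarrow} \Cinft(E_N)_\lambda \longrightarrow 0,
\eqn
and the total complex splits as the orthogonal direct sum of these over all eigenvalues $\lambda \geq 0$.

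Next I would observe that for $\lambda > 0$ this finite-dimensional subcomplex is exact: if $s \in \Cinft(E_j)_\lambda$ with $P_j s = 0$, then $s = \lambda^{-1}\Delta_j s = \lambda^{-1}P_{j-1}P_{j-1}^\ast s \in \Im P_{j-1}$, since $P_j^\ast P_j s$ lands in $\ker P_j$ forces $(P_j^\ast P_j s, s) = \|P_j s\|^2 = 0$... more precisely one splits $\Delta_j = P_{j-1}P_{j-1}^\ast + P_j^\ast P_j$ and checks on $\ker P_j$ that the second term vanishes. The key algebraic fact is then: \emph{the Euler characteristic (alternating sum of dimensions, or more generally the alternating supertrace of any chain endomorphism) of a finite-dimensional exact complex is zero}. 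Applying this to the chain endomorphism induced by $T(g)$ on the $\lambda$-subcomplex — which is well-defined because $T_j(g)$ commutes with $\Delta_j$, as $T(g)$ is a chain map and $\Delta_j$ is built from the $P$'s and their adjoints (one must check $T_j(g)$ intertwines $P_j^\ast$ as well, which follows from $P_jT_j(g) = T_{j+1}(g)P_j$ by taking adjoints together with the fact that the $\Phi_j(g)$ are bundle isomorphisms implementing isometries) — gives that $\sum_j (-1)^j \tr\big(T_j(g)_{|\Cinft(E_j)_\lambda}\big) = 0$ for every $\lambda > 0$.

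For $\lambda = 0$ the subcomplex is the complex of harmonic sections with zero differentials, and $\Cinft(E_j)_0 = \ker \Delta_j \cong H^j(\E,\C)$ by the Hodge theorem, with the induced action of $T_j(g)$ corresponding to $\T_j(g)$. Now I would sum the identity $\tr_{\L^2}\big(T_j(g)e^{-t\Delta_j}\big) = \sum_{\lambda \geq 0} e^{-t\lambda}\tr\big(T_j(g)_{|\Cinft(E_j)_\lambda}\big)$ against $(-1)^j$ over $j$; interchanging the (absolutely convergent, by Seeley's trace-class result) sums over $j$ and $\lambda$, all terms with $\lambda > 0$ cancel by the previous step, leaving only the $\lambda = 0$ term with $e^{0} = 1$, namely $\sum_j (-1)^j \tr \T_j(g)_{|H^j(\E,\C)} = L(T(g))$, independently of $t$.

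The main obstacle, and the only point requiring genuine care, is justifying that $T_j(g)$ commutes with $\Delta_j$ — equivalently, that $T_j(g)$ intertwines the adjoints $P_j^\ast$ in the same way it intertwines the $P_j$. This is where the isometry hypothesis on $G$ and the compatibility of the $\Phi_j(g)$ with the Hermitian metrics (implicit in the setup) is used: one takes the $\L^2$-adjoint of $P_jT_j(g) = T_{j+1}(g)P_j$ and must know that $T_j(g)^\ast$ is again of the same geometric type with $g$ replaced by $g^{-1}$, which holds because $g$ acts by isometries and the change of variables $x \mapsto gx$ has unit Jacobian on the Riemannian volume density. A secondary, more routine point is the interchange of the double sum over $j$ and $\lambda$, which is legitimate because each inner sum over $\lambda$ converges absolutely (finitely many $j$, and $\sum_\lambda e^{-t\lambda}\,|\tr(T_j(g)_{|\Cinft(E_j)_\lambda})| < \infty$ follows from the operator $T_j(g)e^{-t\Delta_j}$ being trace class, bounded composed with trace class).
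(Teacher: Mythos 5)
Your overall strategy---decomposing the heat trace over the eigenspaces of $\Delta_j$, using exactness of the $\lambda$-eigencomplexes for $\lambda>0$, and identifying the $\lambda=0$ contribution with the cohomological trace---is precisely the argument behind the sources the paper cites for this lemma (Atiyah--Bott, Section 8; Gilkey, Lemma 1.10.1). But the step you yourself single out as the crux is where the proof as written breaks down: in the stated generality $T_j(g)$ does \emph{not} commute with $\Delta_j$, and your proposed justification does not establish it. Taking $\L^2$-adjoints of $P_jT_j(g)=T_{j+1}(g)P_j$ gives $T_j(g)^\ast P_j^\ast=P_j^\ast T_{j+1}(g)^\ast$, i.e.\ the family $\{T_j(g)^\ast\}$ intertwines the $P_j^\ast$; to conclude that the family $\{T_j(g)\}$ itself intertwines the $P_j^\ast$ you would need something like $T_j(g)^\ast=T_j(g^{-1})$, which forces the $\Phi_j(g)$ to be fiberwise unitary and $T$ to be a representation of $G$. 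Neither is assumed: the $\Phi_j(g)$ are merely smooth bundle homomorphisms, and no compatibility with the Hermitian metrics is part of the setup. Without the commutation, $T_j(g)$ need not preserve the eigenspaces $\Cinft(E_j)_\lambda$, so there is no induced chain endomorphism of the $\lambda$-subcomplex and the Hopf trace formula cannot be applied in the way you state it. As written, your argument proves the lemma only in the special case of unitary $\Phi_j(g)$.

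The standard repair keeps your skeleton but works with compressions rather than restrictions. Let $\pi_\lambda^j$ denote the orthogonal projection onto $\Cinft(E_j)_\lambda$; then $\tr_{\L^2}\big(T_j(g)e^{-t\Delta_j}\big)=\sum_{\lambda\ge 0}e^{-t\lambda}\tr\big(\pi_\lambda^j T_j(g)\pi_\lambda^j\big)$ holds with no commutation hypothesis. For $\lambda>0$ write $\1=\lambda^{-1}P_{j-1}P_{j-1}^\ast+\lambda^{-1}P_j^\ast P_j$ on $\Cinft(E_j)_\lambda$, so that $\tr(\pi_\lambda^jT_j(g))=\lambda^{-1}\tr(P_{j-1}P_{j-1}^\ast\pi_\lambda^jT_j(g))+\lambda^{-1}\tr(P_j^\ast P_j\pi_\lambda^jT_j(g))$. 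Using only $P_j\pi_\lambda^j=\pi_\lambda^{j+1}P_j$ (which does hold, since $P_j$ intertwines $\Delta_j$ and $\Delta_{j+1}$), the chain-map identity $P_jT_j(g)=T_{j+1}(g)P_j$, and cyclicity of the trace (legitimate here because every operator in sight is composed with a finite-rank smoothing projector), one finds $\lambda^{-1}\tr(P_j^\ast P_j\pi_\lambda^jT_j(g))=\lambda^{-1}\tr(P_jP_j^\ast\pi_\lambda^{j+1}T_{j+1}(g))$, so the alternating sum over $j$ telescopes to zero. For $\lambda=0$ one checks that $\tr(\pi_0^jT_j(g)\pi_0^j)=\tr\,\T_j(g)_{|H^j(\E,\C)}$ using only that $T_j(g)$ preserves $\ker P_j$ and $\Im P_{j-1}$, hence is block upper triangular with respect to the Hodge splitting $\ker P_j=\ker\Delta_j\oplus\Im P_{j-1}$. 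With these two replacements your argument becomes a complete proof in the generality actually claimed by the lemma.
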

\begin{proof}
See Atiyah--Bott \cite{atiyah-bott67},  Section 8, Kotake \cite{kotake69}, Lemma 3, or Gilkey,  \cite{gilkey95}  Lemma 1.10.1.
\end{proof}

Next, let $(\pi_\rho,V_\rho)$ be a unitary irreducible representation  of $G$ associated to the character $\rho \in \hat G$. 
In what follows, we shall use Lemma \ref{lemma:1} to prove a local formula for the $\rho$-equivariant Lefschetz number $L_\rho(T)$ introduced in \eqref{eq:d}. For this, we shall require  an asymptotic expansion for 
\bqn 
\int_G \tr_{\L^2} \big (T_j(g) e^{-t\Delta_j}\big ) \overline{\rho(g)} \d_G(g), \qquad t \to 0^+,
\eqn
which will be derived in the next sections.

\section{Pseudodifferential operators and equivariant heat asymptotics}
\label{sec:SEHA}

 Our aim is to give a local formula for the $\rho$-equivariant Lefschetz number $\Lcal_\rho(T)$  using the alternating sum formula of  Lemma \ref{lemma:1}, and asymptotics of the heat equation. For this, we shall first  construct an approximation of the heat operator by pseudodifferential operators. Let $\tilde U$ be an open set in $\R^n$. Recall that a continuous linear operator
\begin{displaymath}
  A:\CT(\tilde U) \longrightarrow \Cinft(\tilde U)
\end{displaymath}
 is called a \emph{pseudodifferential operator} if it can be written in the form
\begin{equation}
  \label{I}
Au(\tilde x)=\int e^{i\tilde x \cdot \xi} a(\tilde x,\xi) \hat u(\xi) \dbar \xi,
\end{equation}
where $\hat u=\mathcal{F}(u)$ denotes the Fourier transform of $u$,  $\dbar \xi=(2\pi)^{-n} \d \xi$, and $a(\tilde x,\xi)\in \Cinft(\tilde U \times \rn)$ is an amplitude with the following property.  There is an $l \in \R$ such that for any multiindices $\alpha,\beta$, and any compact set $K\subset \tilde U$, there exist constants $C_{\alpha,\beta,K}$ for which
\begin{equation*}
  \label{H}
|\gd ^\alpha_\xi\gd ^\beta_{\tilde x} a(\tilde x,\xi)| \leq C_{\alpha,\beta,K} (1+|\xi|^2)^{(l-|\alpha|)/2}, \qquad \tilde x \in K,  \quad \xi \in \R^n,
\end{equation*}
where  $|\alpha|=\alpha_1+\dots+\alpha_n$. The class of all such functions $a(\tilde x,\xi)$ is denoted by  $\Sym^l(\tilde U\times \R^n)$, and the class of operators of the form \eqref{I}  with $a(\tilde x,\xi) \in \Sym^l(\tilde U\times \R^n)$, by $\L^l(\tilde U)$.
 In particular, one  puts $\Sym^{-\infty}(\tilde U\times \R^n)=\bigcap _{l \in \R} \Sym^l(\tilde U\times \R^n)$. 

 Consider next an $n$-dimensional $\Cinft$--manifold $M$, and let  $\mklm{(\kappa_\gamma, U^\gamma)}$ be an atlas for $M$. 
Write $\tilde U^\gamma= \kappa_\gamma(U^\gamma)\subset \rn$. If $\pi_E:E\rightarrow M$ and $\pi_F:F\rightarrow M$ are smooth vector bundles over $M$ trivialized by
\begin{equation*}
\label{IIIa}
  \alpha_{E}^\gamma :E_{|U^\gamma}\longrightarrow U^\gamma \times \C^{e}, \mathrm{e}\mapsto (\pi_E(\mathrm{e}), \phi_E^\gamma(\mathrm{e})), \qquad  \alpha_{F}^\gamma :F_{|U^\gamma}\longrightarrow U^\gamma \times \C^{f},  \mathrm{f}\mapsto (\pi_F(\mathrm{f}), \phi_F^\gamma(\mathrm{f})),
\end{equation*}
then a continuous linear operator 
\begin{equation*}
  A:\CT(E)\longrightarrow \Cinft(F)
\end{equation*}
is called a \emph{pseudodifferential operator between sections of $E$ and $F$ of order $l$}, if for any $U^\gamma$ there is a $f\times e $-matrix of  pseudodifferential operators $\tilde A_{ij} \in \L^l(\tilde U^\gamma)$ such that 
\begin{equation*}
\label{IIIb}
  (\phi_F^\gamma\circ (Av)_{|U^\gamma})_i=\sum_{j} A_{ij} (\phi_E^\gamma\circ v)_j, \qquad v \in \CT(U^\gamma; E),
\end{equation*}
where the $A_{ij}$ are defined by the relations $A_{ij} u=[\tilde A_{ij} (u \circ \kappa_\gamma^{-1} ) ] \circ \kappa_\gamma$, $u \in \CT(U^\gamma)$. 
In this case we write $A \in \L^l(M;E,F)$, or simply $\L^l(E,F)$.  As explained before, the  Schwartz kernel $K_A$ of $A$ is a distribution section on $M\times M$ of the bundle $F\boxtimes E'$. For an introduction into the theory of pseudodifferential operators, the reader is referred to \cite{shubin} or \cite{hoermanderIII}. 

Suppose now that $M$ is a closed Riemannian manifold, and $E$ a complex smooth vector bundle over $M$ with a smooth Hermitian metric, and let $P:\Cinft(E) \rightarrow \L^2(E)$ be an elliptic differential operator as in \eqref{eq:e}. Consider the heat operator  associated to $P$, and let $\Gamma\subset \C$ be the path specified in \eqref{eq:f}.  $P$ is locally given by a matrix of differential operators $P_{ij}^\gamma \in \L^l(\tilde U^\gamma)$ with symbols   $p_{ij}^\gamma(\tilde x,\xi) \in S^l(\tilde U^\gamma \times \rn)$. On each chart $U^\gamma$, the symbol of $P$ is  represented  by the matrix $p^\gamma(\tilde x,\xi)=(p_{ij}^\gamma(\tilde x,\xi))_{ij}$, and we decompose the latter into its homogeneous components 
\bqn 
p^\gamma(\tilde x,\xi)=p_m^\gamma(\tilde x,\xi)+ \dots +p_0^\gamma(\tilde x,\xi),
\eqn
where $m$ is the order of $P$. 
The positivity of $P$ means that $p_m^\gamma(\tilde x,\xi)$ is a positive definite matrix for $\xi \not=0$, and together with the ellipticity and the symmetry  of $P$ this implies that $p_m^\gamma(\tilde x,\xi) - \lambda$ is invertible  for $\lambda \in \Gamma$. Write $(\kappa_\gamma^{-1})^\ast \d M= \beta_\gamma \d \tilde y$. We now recursively define the local symbols 
\begin{align*}
r_0^\gamma(\tilde x,\xi,\lambda,P) &=(p_m^\gamma(\tilde x,\xi) - \lambda)^{-1}, \\
r_k^\gamma(\tilde x,\xi,\lambda, P)&=-r_0^\gamma(\tilde x,\xi,\lambda,P) \left ( \sum\limits_{|\beta|+m+l'-l=k, l'<k} \frac {(-i)^{|\beta|}} {\beta!}\,  (\gd^\beta_\xi p_{l}^\gamma)(\tilde x,\xi) \cdot (\gd^\beta_{\tilde x}r_{l'}^\gamma)(\tilde x,\xi,\lambda,P) \right),
\end{align*}
as well as 
\bqn 
e_k^\gamma(t,\tilde x,\xi,P)= \frac 1 {2\pi i} \int_\Gamma e^{-t\lambda} r_k^\gamma(\tilde x,\xi,\lambda,P) \d \lambda,\qquad t >0,
\eqn
 and consider the corresponding pseudodifferential operators
\begin{align*}
[\tilde R^\gamma_k(\lambda,P) v](\tilde x)&= \int e^{i \tilde x\cdot \eta} r_k^\gamma(\tilde x,\eta,\lambda,P) \widehat{ v \beta_\gamma}(\eta) \dbar \eta, \\
[\tilde E^\gamma_k(t,P) v](\tilde x)&= \int e^{i \tilde x\cdot \eta} e_k^\gamma(t,\tilde x,\eta,P) \widehat{ v \beta_\gamma}(\eta) \dbar \eta, 
\end{align*}
where $v \in \CT(\tilde U^\gamma; \C^e)$. With these definitions,  set 
\begin{align*}
 R^\gamma_k(\lambda,P)u &=(\phi_E^\gamma)^{-1} \circ \big [\tilde R^\gamma_k(\lambda,P)\big (  \phi_E^\gamma \circ u \circ \kappa_\gamma^{-1}\big ) \big ]\circ \kappa_\gamma, \\  E^\gamma_k(t,P) u &=(\phi_E^\gamma)^{-1}\circ \big [\tilde E^\gamma_k(t,P)\big (  \phi_E^\gamma \circ u \circ \kappa_\gamma^{-1}\big )\big ]  \circ \kappa_\gamma,
\end{align*}
where $u \in \CT(U^\gamma;E)$. 
Let $\mklm{f_\gamma}$ be a partition of unity subordinated to the atlas $\mklm{(\kappa_\gamma,U^\gamma)}$, and $\bar f_\gamma \in \CT(U^\gamma)$ test functions satisfying $\bar f_\gamma \equiv 1$ on $\supp f_\gamma$. Denote by $F_\gamma$ and $\bar F_\gamma$ the multiplication operators corresponding to $f_\gamma$ and $\bar f_\gamma$, respectively, and define on $M$ 
\begin{align}
\label{eq:8}
 R_K(\lambda,P)=&\sum_{k=0}^K \sum _\gamma F_\gamma \,    R_k^\gamma(\lambda,P)\,  \bar F_\gamma, \qquad 
 E_K(t,P)=\sum_{k=0}^K\sum _\gamma  F_\gamma \,  E_k^\gamma(t,P) \,  \bar F_\gamma.
\end{align}
Explicitly, one computes for $u \in \CT(E)$
\begin{align}
\begin{split}
\label{eq:9}
  F_\gamma \,  E_k^\gamma &(t,P) \,   \bar F_\gamma u(x)=f_\gamma(x) \Big [ (\phi_E^\gamma)^{-1} \circ \big [\tilde E_k^\gamma(t,P)(\phi_E^\gamma \circ \bar f_\gamma u \circ \kappa_\gamma^{-1}) \big ] \circ \kappa_\gamma \Big ] (x)\\
  &=f_\gamma(x) (\phi_E^\gamma)^{-1}  \left [ \int e^{i\kappa_\gamma(x) \cdot \eta} e^\gamma_k(t,\kappa_\gamma(x),\eta,P) \F\big ((\phi_E^\gamma \circ \bar f_\gamma u \circ \kappa_\gamma^{-1})\beta_\gamma\big )(\eta) \dbar \eta\right ]\\
  &=f_\gamma(x) (\phi_E^\gamma)^{-1}  \left [ \int_{\tilde U^\gamma} \int e^{i[\kappa_\gamma(x)-\tilde y] \cdot \eta} e^\gamma_k(t,\kappa_\gamma(x),\eta,P) (\phi_E^\gamma \circ \bar f_\gamma u )(\kappa_\gamma^{-1}(\tilde y)) \beta_\gamma(\tilde y) \dbar \eta \, d\tilde y\right ]\\
&=f_\gamma(x) (\phi_E^\gamma)^{-1}  \left [ \int_{U^\gamma} \int e^{i[\kappa_\gamma(x)-\kappa_\gamma( y)] \cdot \eta}  \bar f_\gamma(y) e^\gamma_k(t,\kappa_\gamma(x),\eta,P)   (\phi_E^\gamma \circ  u )(y)  \dbar \eta \, dM(y)\right ].
\end{split}
\end{align}

The operators $R_K(\lambda,P)$ and $E_K(t,P)$ are approximations of the resolvent $(P-\lambda \1)^{-1}$ and the heat operator $e^{-tP}$, respectively, as $K \to \infty$. More precisely, if $Q: H^s(E) \rightarrow H^{s'}(E)$ is an operator between Sobolev spaces of sections, define the operator norms
\bqn 
\norm{Q}_{s,s'}=\sup_{u \in \Cinft, u\not=0} \norm{Qu}_{s'} \norm{u}_{s}^{-1}. 
\eqn
Then, one has the following
\begin{lemma}\hspace{0cm}
\begin{enumerate}
\item For every $K\in \N$, we have 
$$(P-\lambda \1) R_K(\lambda,P) -\1\sim_K 0, \qquad R_K(\lambda,P)(P-\lambda \1) -\1 \sim_K 0.$$ 
\item The operators $E_K(t,P)$ have smooth kernels and for every $l\in \N$ there exists a $K(l)\in \N$ such that for $0<t<1$
\bqn 
\norm{e^{-tP} -E_K(t,P)}_{-l,l} \leq C_l t^l
\eqn
for all $K \geq K(l)$. 
\end{enumerate}
\end{lemma}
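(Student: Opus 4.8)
The plan is to verify both statements by the standard symbol-calculus argument, treating the resolvent parametrix first and then passing to the heat operator via the contour integral \eqref{eq:f}. For part (1), I would start locally on a chart $\tilde U^\gamma$. By construction, the symbols $r_k^\gamma(\tilde x,\xi,\lambda,P)$ are precisely the terms produced by applying the composition formula for pseudodifferential operators to $(p^\gamma(\tilde x,\xi)-\lambda)\circ r^\gamma(\tilde x,\xi,\lambda,P)$ and demanding that the result equal the identity order by order: $r_0^\gamma$ inverts the leading symbol $p_m^\gamma-\lambda$, and the recursion for $r_k^\gamma$ cancels the lower-order contributions. Thus $(P-\lambda\1)\big(\sum_{k=0}^K R_k^\gamma\big) - \1$ has symbol in $\Sym^{-K-1}$ (with the appropriate uniformity in $\lambda\in\Gamma$, since $r_0^\gamma = (p_m^\gamma-\lambda)^{-1}$ and its $\tilde x$-derivatives are $O(|\lambda|^{-1})$-type on $\Gamma$ by ellipticity and positivity). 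The cutoffs $F_\gamma,\bar F_\gamma$ with $\bar f_\gamma\equiv 1$ on $\supp f_\gamma$ guarantee that gluing over the atlas introduces only smoothing errors: off-diagonal terms $F_\gamma(\cdots)\bar F_{\gamma'}$ with disjoint supports are already smoothing, and $F_\gamma(P-\lambda\1) - (P-\lambda\1)F_\gamma$ is a differential operator of lower order supported where $\bar f_\gamma\equiv 1$. Collecting these, $(P-\lambda\1)R_K(\lambda,P)-\1$ is an operator whose Schwartz kernel is $C^{K'}$ for $K'\to\infty$ as $K\to\infty$, with seminorm estimates uniform on $\Gamma$; this is the meaning of $\sim_K 0$. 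The right-parametrix identity $R_K(\lambda,P)(P-\lambda\1)-\1\sim_K 0$ follows by the same computation applied to the transposed composition, or by the usual argument that a left parametrix modulo smoothing is also a right parametrix modulo smoothing.

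For part (2), I would integrate the parametrix identity against $\frac{1}{2\pi i}e^{-t\lambda}\d\lambda$ over $\Gamma$. Since $e^{-tP}=\frac1{2\pi i}\int_\Gamma e^{-t\lambda}(P-\lambda\1)^{-1}\d\lambda$ and $E_K(t,P)=\frac1{2\pi i}\int_\Gamma e^{-t\lambda}R_K(\lambda,P)\d\lambda$, one writes
\begin{equation*}
(P-\lambda\1)^{-1} - R_K(\lambda,P) = (P-\lambda\1)^{-1}\big[\1 - (P-\lambda\1)R_K(\lambda,P)\big],
\end{equation*}
so $e^{-tP}-E_K(t,P) = \frac1{2\pi i}\int_\Gamma e^{-t\lambda}(P-\lambda\1)^{-1}\big[\1-(P-\lambda\1)R_K(\lambda,P)\big]\d\lambda$. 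The bracket is smoothing of order $\sim K$ uniformly on $\Gamma$ by part (1), while $\norm{(P-\lambda\1)^{-1}}_{s,s+m}$ is bounded on $\Gamma$ with polynomial decay in $|\lambda|$; combining these with the rapid decay of $|e^{-t\lambda}|$ along $\Gamma$ and rescaling the contour (or using $|e^{-t\lambda}|\,|\lambda|^{N}$ integrability), one gains an arbitrary positive power of $t$ as $t\to 0^+$ once $K$ is large enough relative to $l$. This yields a $K(l)$ with $\norm{e^{-tP}-E_K(t,P)}_{-l,l}\le C_l t^l$ for $K\ge K(l)$ and $0<t<1$. Smoothness of the kernels of $E_K(t,P)$ is immediate: the $e_k^\gamma$ are, after the $\lambda$-integration, symbols that decay rapidly in $\eta$ for each fixed $t>0$ (again by the contour estimate on $r_k^\gamma$), so the oscillatory integrals in \eqref{eq:9} converge to smooth kernels.

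The main obstacle is the uniform-in-$\lambda$ bookkeeping along the unbounded contour $\Gamma$: one must track how each symbol $r_k^\gamma$ and each derivative thereof decays in $|\lambda|$ (the correct weight is roughly $(1+|\xi|^m+|\lambda|)^{-1}$ in place of the naive $(1+|\xi|)^{-m}$), and then show that these weights, integrated against $e^{-t\lambda}\d\lambda$ over $\Gamma$, produce exactly the powers of $t$ claimed. This is the standard but delicate Seeley-type estimate; I would either invoke the parameter-dependent symbol calculus directly (as in Shubin or Gilkey) or carry out the contour deformation $\lambda\mapsto t^{-1}\lambda$ to make the $t$-powers manifest. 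Everything else — the recursion producing the parametrix, the harmless nature of the chart cutoffs, and the reduction of the heat estimate to the resolvent estimate — is routine.
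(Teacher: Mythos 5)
Your outline is correct and follows essentially the same route as the paper, which simply cites Gilkey's Lemmata 1.7.2 and 1.8.1: the recursion for $r_k^\gamma$ is exactly the parameter-dependent parametrix construction for $(P-\lambda\1)^{-1}$, and the heat estimate is obtained by integrating the resolvent error against $e^{-t\lambda}\,d\lambda$ over $\Gamma$ with the weights $(1+|\xi|^m+|\lambda|)^{-1}$ you identify. The point you flag as the main obstacle (uniform $\lambda$-bookkeeping along $\Gamma$) is indeed the only delicate step, and it is handled in the cited reference precisely by the parameter-dependent symbol calculus you propose.
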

\begin{proof}
See Gilkey, \cite{gilkey95}, Lemmata 1.7.2 and 1.8.1.
\end{proof}
Let now $T_j(g)$ and $\Delta_j$ be as in Lemma \ref{lemma:1}, and $l \in \N$. Assertion (2) of the preceding lemma  implies that $E_K(t,P)$ is of $\L^2$-trace class, and for every $K\geq K(l)$
\bq 
\label{eq:10}
\tr_{\L^2} \big (T_j(g) e^{-t\Delta_j}\big )= \tr_{\L^2} \big (T_j(g) E_K(t,\Delta_j) \big )+O(t^l).
\eq
Since $T_j(g)= \Phi_j(g) \circ g^\ast$, \eqref{eq:8} and \eqref{eq:9} imply that 
\begin{align}
\begin{split}
\label{eq:11}
\tr_{\L^2} \big (T_j(g) &E_K(t,\Delta_j) \big )=\sum_{k=0}^K \sum_\gamma \int_M \int  e^{i[\kappa_\gamma(gx)-\kappa_\gamma( x)] \cdot \eta}  \bar f_\gamma(x) f_\gamma(gx)  \\ 
& \cdot \tr \Big [ \Phi_j(g)_{gx} \circ (\phi_{E_j}^\gamma)^{-1}_{gx} \circ e_k^\gamma(t,\kappa_\gamma(gx),\eta,\Delta_j)  \circ (\phi_{E_j}^\gamma)_x \Big  ] \dbar \eta \, dM(x)\\
&=\sum_{k=0}^K t^{\frac {k-n} m} \sum_\gamma \int_M \int  e^{\frac i {{t}^{1/m}}[\kappa_\gamma(gx)-\kappa_\gamma( x)] \cdot \eta}  \bar f_\gamma(x) f_\gamma(gx)  \\ 
& \cdot \tr \Big [ \Phi_j(g)_{gx} \circ (\phi_{E_j}^\gamma)^{-1}_{gx} \circ e_k^\gamma(1,\kappa_\gamma(gx),\eta,\Delta_j)  \circ (\phi_{E_j}^\gamma)_x \Big  ] \dbar \eta \, dM(x),
\end{split}
\end{align}
where we took into account that $e_k^\gamma(t,\kappa_\gamma(gx),\eta,\Delta_j)=t^{k/m} e_k^\gamma(1,\kappa_\gamma(gx), {t}^{1/m}\cdot  \eta ,\Delta_j)$. As a consequence of \eqref{eq:10}, we obtain the following

\begin{proposition}
\label{prop:1}
Let  $g \in G$ be fixed and $t>0$.  For every $l \in \N$ there exists a $K(l)\in \N$ such that 
\begin{align*}
\begin{split}
\tr_{\L^2} \big (T_j(g) e^{-t\Delta_j}\big )&=   \sum_{k=0}^K t^{\frac {k-n} m} \sum_\gamma \int_M \int  e^{\frac i {{t}^{1/m}}[\kappa_\gamma(gx)-\kappa_\gamma( x)] \cdot \eta}  \bar f_\gamma(x) f_\gamma(gx)  \\ 
& \cdot \tr \Big [ \Phi_j(g)_{gx} \circ (\phi_{E_j}^\gamma)^{-1}_{gx} \circ e_k^\gamma(1,\kappa_\gamma(gx),\eta,\Delta_j)  \circ (\phi_{E_j}^\gamma)_x \Big  ] \dbar \eta \, dM(x)+O(t^l)
\end{split}
\end{align*}
for all $K \geq K(l)$.
 \end{proposition}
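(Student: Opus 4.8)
\emph{Proof proposal.}
The proposition simply packages the two facts established in the discussion preceding it: formula \eqref{eq:10}, which says that inside the trace the heat operator $e^{-t\Delta_j}$ may be replaced by the parametrix $E_K(t,\Delta_j)$ at the cost of an error $O(t^l)$ once $K$ is large, and formula \eqref{eq:11}, which evaluates $\tr_{\L^2}(T_j(g)E_K(t,\Delta_j))$ explicitly as a sum of oscillatory integrals after rescaling the fibre variable. The plan is therefore: (i) justify \eqref{eq:10} rigorously from assertion (2) of the preceding lemma; (ii) verify \eqref{eq:11} from \eqref{eq:8}, \eqref{eq:9} and the homogeneity of the $e_k^\gamma$; (iii) substitute (ii) into (i), taking $K(l)$ to be the integer produced in step (i).

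For step (i), fix $l\in\N$ and choose $l_0\in\N$ with $l_0\ge l$ and $l_0>n/2$. By assertion (2) of the preceding lemma there is $K(l_0)$ with $\norm{e^{-t\Delta_j}-E_K(t,\Delta_j)}_{-l_0,l_0}\le C_{l_0}t^{l_0}$ for all $K\ge K(l_0)$ and $0<t<1$. Since $g$ is a diffeomorphism and $\Phi_j(g)$ a smooth bundle homomorphism, $T_j(g)=\Phi_j(g)\circ g^\ast$ is bounded on every Sobolev space of sections of $E_j$; in particular $\norm{T_j(g)}_{l_0,l_0}<\infty$. On the compact manifold $M$ the embeddings $\iota_1\colon\L^2(E_j)\hookrightarrow H^{-l_0}(E_j)$ and $\iota_2\colon H^{l_0}(E_j)\hookrightarrow\L^2(E_j)$ are Hilbert--Schmidt, because $l_0>n/2$. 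Writing the $\L^2$-operator $T_j(g)(e^{-t\Delta_j}-E_K(t,\Delta_j))$ as the composition $\iota_2\circ T_j(g)\circ(e^{-t\Delta_j}-E_K(t,\Delta_j))\circ\iota_1$ — a product of two Hilbert--Schmidt operators with two bounded ones inserted — exhibits it as trace class, with
\bqn
\norm{T_j(g)\big(e^{-t\Delta_j}-E_K(t,\Delta_j)\big)}_{\mathrm{tr}}\le\norm{\iota_2}_{\mathrm{HS}}\,\norm{T_j(g)}_{l_0,l_0}\,C_{l_0}\,t^{l_0}\,\norm{\iota_1}_{\mathrm{HS}}=O(t^l)\qquad(0<t<1),
\eqn
since $l_0\ge l$. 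As $e^{-t\Delta_j}$ and $E_K(t,\Delta_j)$ both have smooth kernels, $T_j(g)e^{-t\Delta_j}$ and $T_j(g)E_K(t,\Delta_j)$ are trace class, and subtracting their traces yields \eqref{eq:10} with $K(l):=K(l_0)$.

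For step (ii), one reads off the Schwartz kernel of each $F_\gamma E_k^\gamma(t,\Delta_j)\bar F_\gamma$ from \eqref{eq:9}, composes on the left with $g^\ast$ and $\Phi_j(g)$ to obtain the kernel of $T_j(g)F_\gamma E_k^\gamma(t,\Delta_j)\bar F_\gamma$, restricts to the diagonal and takes fibrewise traces; summing over $k\le K$ and $\gamma$ and integrating over $M$ produces the first line of \eqref{eq:11}. The rescaling leading to the second line uses the homogeneity identity $e_k^\gamma(t,\tilde x,\eta,\Delta_j)=t^{k/m}e_k^\gamma(1,\tilde x,t^{1/m}\eta,\Delta_j)$ — which follows from the contour integral defining $e_k^\gamma$ and the fact that $p_m^\gamma$ is homogeneous of degree $m$ in $\eta$, so that each $r_k^\gamma$ scales by $t^{-1-k/m}$ under $(\eta,\lambda)\mapsto(t^{1/m}\eta,t\lambda)$ — together with the substitution $\eta\mapsto t^{-1/m}\eta$, which contributes a factor $t^{-n/m}$ from $\dbar\eta$ and turns the phase into $e^{it^{-1/m}[\kappa_\gamma(gx)-\kappa_\gamma(x)]\cdot\eta}$; the two powers of $t$ combine to the global factor $t^{(k-n)/m}$ of the statement. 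Substituting \eqref{eq:11} into \eqref{eq:10} proves the claim. The only step that is not bookkeeping is the trace-norm estimate in (i): upgrading the operator-norm bound of the preceding lemma to a bound on traces is what forces one to factor through Sobolev spaces and invoke that, on a compact manifold, the pertinent embeddings become Hilbert--Schmidt as soon as the order exceeds $n/2$. (One should also note that the contour $\Gamma$ may be deformed back after the rescaling $\lambda\mapsto t\lambda$ without crossing the spectrum of the model symbol, which is immediate from scale-invariance.)
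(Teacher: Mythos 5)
Your proposal is correct and follows essentially the same route as the paper: the paper's proof of Proposition \ref{prop:1} is precisely the preceding computation, namely replacing $e^{-t\Delta_j}$ by the parametrix $E_K(t,\Delta_j)$ via \eqref{eq:10} and then evaluating $\tr_{\L^2}(T_j(g)E_K(t,\Delta_j))$ through \eqref{eq:8}, \eqref{eq:9} and the homogeneity of the $e_k^\gamma$ as in \eqref{eq:11}. You merely fill in details the paper delegates to Gilkey, such as upgrading the $\norm{\cdot}_{-l,l}$ bound to a trace-norm bound via Hilbert--Schmidt Sobolev embeddings, and these are carried out correctly.
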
 
\qed

\noindent
Now, Lemma \ref{lemma:1} implies that for any $t>0$
\bqn 
\Lcal_\rho(T)=  \frac 1 {\vol G} \sum_{j=0}^N (-1)^j  \int_G  \tr_{\L^2} \big (T_j(g) e^{-t\Delta_j}\big ) \overline{\rho(g)} \d_G(g).
\eqn
With  Proposition \ref{prop:1} we therefore obtain
\begin{theorem}
\label{thm:3}
Let $\rho \in \hat G$. For every $l \in \N$ there exists a $K(l)\in \N$ such that 
\begin{align}
\begin{split}
\label{eq:local}
\Lcal_\rho(T)&= \frac 1 {\vol G} \sum_{j=0}^N (-1)^j  \int_G  \sum_{k=0}^K t^{\frac {k-n} m} \sum_\gamma \int_M \int  e^{\frac i {{t}^{1/m}}[\kappa_\gamma(gx)-\kappa_\gamma( x)] \cdot \eta}  \bar f_\gamma(x) f_\gamma(gx)  \\ 
& \cdot \tr \Big [ \Phi_j(g)_{gx} \circ (\phi_{E_j}^\gamma)^{-1}_{gx} \circ e_k^\gamma(1,\kappa_\gamma(gx),\eta,\Delta_j)  \circ (\phi_{E_j}^\gamma)_x \Big  ] \dbar \eta \, dM(x)\overline{\rho(g)} \d_G(g)+O(t^l)
\end{split}
\end{align}
for all $K \geq K(l)$ and any $t>0$. 
\end{theorem}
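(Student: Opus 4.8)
The plan is to combine Proposition~\ref{prop:1} with the definition \eqref{eq:d} of the $\rho$-equivariant Lefschetz number and with Lemma~\ref{lemma:1}, the only subtlety being the interchange of the $G$-integration with the asymptotic remainder estimate. First I would recall that by Lemma~\ref{lemma:1} we have, for \emph{every} $t>0$,
\bqn
L(T(g)) = \sum_{j=0}^N (-1)^j \tr_{\L^2}\big(T_j(g) e^{-t\Delta_j}\big),
\eqn
so that, inserting this into \eqref{eq:d} and using linearity of the Haar integral,
\bqn
\Lcal_\rho(T) = \frac{1}{\vol G}\sum_{j=0}^N (-1)^j \int_G \tr_{\L^2}\big(T_j(g) e^{-t\Delta_j}\big)\,\overline{\rho(g)}\,\d_G(g).
\eqn
This identity holds for all $t>0$ because the left-hand side is manifestly independent of $t$; that $t$-independence is precisely the content of Bott's observation encoded in Lemma~\ref{lemma:1}.

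Next I would substitute, for each fixed $j$ and each fixed $g \in G$, the expansion furnished by Proposition~\ref{prop:1}: given $l \in \N$, choose $K(l) \in \N$ such that for all $K \geq K(l)$
\bqn
\tr_{\L^2}\big(T_j(g) e^{-t\Delta_j}\big) = \sum_{k=0}^K t^{\frac{k-n}{m}} \sum_\gamma \int_M \int e^{\frac{i}{t^{1/m}}[\kappa_\gamma(gx)-\kappa_\gamma(x)]\cdot\eta}\,\bar f_\gamma(x) f_\gamma(gx) \cdot \tr\big[\cdots\big]\dbar\eta\,dM(x) + O(t^l).
\eqn
The point requiring a word of justification is that $K(l)$ can be chosen \emph{independently of $g$}: this follows from Lemma~2, whose operator-norm bound $\norm{e^{-t\Delta_j} - E_K(t,\Delta_j)}_{-l,l} \leq C_l t^l$ involves only the operator $\Delta_j$ and not $g$, combined with the fact that $g \mapsto T_j(g)$ is a family of bounded operators on $\L^2(E_j)$ with uniformly bounded norm, since $G$ acts by isometries and the $\Phi_j(g)$ depend smoothly (hence, over compact $G$, boundedly) on $g$. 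Hence the error term in \eqref{eq:10}, and therefore in Proposition~\ref{prop:1}, is $O(t^l)$ uniformly in $g \in G$.

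Having this uniformity, I would integrate the expansion over $G$ against $\overline{\rho(g)}\,\d_G(g)$. Since $\vol G < \infty$ and $|\overline{\rho(g)}|$ is bounded on the compact group $G$, the uniform $O(t^l)$ remainder integrates to an $O(t^l)$ term, and the finitely many summands in $k$ and $\gamma$ may be pulled outside the integral. Assembling the result over $j$ with the prefactor $\frac{1}{\vol G}\sum_j (-1)^j$ yields exactly \eqref{eq:local}, valid for all $K \geq K(l)$ and any $t>0$. The main obstacle is the one flagged above — ensuring the remainder estimate is uniform in the group variable $g$ so that it survives the Haar integration — and this is dispatched by inspecting that the constant $K(l)$ in Lemma~2 depends only on $\Delta_j$ together with the uniform boundedness of $\{T_j(g)\}_{g\in G}$ on $\L^2$; everything else is bookkeeping.
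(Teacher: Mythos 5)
Your proof follows essentially the same route as the paper: insert Lemma \ref{lemma:1} into the definition \eqref{eq:d} and then substitute the expansion of Proposition \ref{prop:1} under the Haar integral. Your extra paragraph justifying that the $O(t^l)$ remainder is uniform in $g\in G$ (via the $g$-independence of the constant in the norm estimate for $e^{-t\Delta_j}-E_K(t,\Delta_j)$ and the uniform boundedness of $\{T_j(g)\}_{g\in G}$ over the compact group) correctly fills in a point the paper leaves implicit.
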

\qed

\noindent
The left-hand side of \eqref{eq:local} does not depend on $t>0$. In order to find a local formula for $\Lcal_\rho(T)$, we have to  find  an asymptotic expansion of the right-hand side, and determine the constant term. We are therefore left with the task of examining the asymptotic behavior of integrals of the form 
\begin{align}
\label{eq:int}
I(\mu)
&= \int _{T^\ast U}  \int_{G} e^{i  \Phi(x , \xi,g)/\mu }   a( g  x,  x , \xi,g)d_G(g) \d(T^\ast U)(x,\xi),  \qquad \mu \to 0^+,  
\end{align}
via the generalized stationary phase theorem, where $(\kappa,U)$ are local coordinates on $M$,  $\d(T^\ast U)(x,\xi)$  is the canonical volume density on $T^\ast U$, and $d_G(g)$ is the  volume density of a left invariant metric on $G$, while $a \in \CT(U \times T^\ast  U\times G)$ is an amplitude which does not depend on $\mu$, and
\bq
\label{eq:phase}
\Phi(x, \xi, g) =(\kappa(gx) - \kappa ( x)) \cdot  \xi.
\eq 
This will be done in the next section.

\section{Singular equivariant asymptotics and resolution of singularities}
\label{sec:SEARS}

To examine the asymptotic behavior of the integrals \eqref{eq:int} by means of the  stationary phase principle, we have to study the critical set of the phase function \eqref{eq:phase}.  Consider for this the cotangent bundle  $\pi:T^\ast M\rightarrow M$, as well as the tangent bundle $\tau: T(T^\ast M)\rightarrow T^\ast M$, and define on $T^\ast M$ the Liouville form 
\bqn 
\Theta(\mathfrak{X})=\tau(\mathfrak{X})[\pi_\ast(\mathfrak{X})], \qquad \mathfrak{X} \in T(T^\ast M).
\eqn
Regard $T^\ast M$ as a  symplectic manifold with symplectic form 
$
\omega= d\Theta
$, 
and define for any  element $X$ in the Lie algebra $\g$ of $G$ the function
\bqn
J_X: T^\ast M \longrightarrow \R, \quad \eta \mapsto \Theta(\widetilde{X})(\eta),
\eqn
where $\widetilde X$ denotes the fundamental vector field on $T^\ast M$, respectively $M$,  generated by $X$.   $G$ acts on $T^\ast M$ in a Hamiltonian way, and the corresponding symplectic momentum  map is  given by 
\bqn
\mathbb{J}:T^\ast M\to \g^\ast,  \quad \mathbb{J}(\eta)(X)=J_X(\eta).
\eqn
 Let $(\kappa,U)$ be local coordinates on $M$ as in \eqref{eq:int}, and write  $\kappa(x)=(\tilde x_1,\dots, \tilde x_n)$,  $\eta=\sum \xi_i (d\tilde x_i)_x \in T_x^\ast U$. One  computes then for any $X \in \g$
\begin{align*}
\frac d{dt} \Phi( x, \xi, \e{-tX})_{|t=0}&=\frac d{dt} \Big ( \kappa (\e{-tX}  x)  \cdot \xi \Big )_{|t=0}= \sum \xi_i \widetilde X_{x}(\tilde x_i)  = \sum \xi_i (d\tilde x_i)_{x}(\widetilde X_{x})\\ 
&=\eta (\widetilde X_{x})=\Theta(\widetilde X)(\eta) = \mathbb{J}(\eta)(X).
\end{align*}
Therefore $\Phi$ represents the global analogue of the momentum map. Further, one has
\bqn
\gd _{\tilde x} \Phi ( \kappa ^{-1} ( \tilde x), \xi, g) = [ \, ^T ( \kappa \circ g \circ \kappa ^{-1} )_{\ast,  \tilde x} - \1 ]  \xi= ( g^\ast_{\tilde x}-\1) \cdot \xi, 
\eqn
so that $\gd _x \Phi ( x, \xi, g )=0$ amounts precisely to the condition $g^\ast  \xi=\xi$. Since  $\gd_\xi \Phi(x,\xi,g)=0$ if, and only if $gx=x$, 
one obtains  
 \begin{align}
 \label{eq:16}
 \begin{split}
\mathcal{C}&= \Crit(\Phi)
=\mklm{(x,\xi,g) \in T^\ast U \times G: (\Phi_{\ast})_{(x,\xi,g)}=0} \\
&= \mklm{( x  ,\xi, g) \in (\Xi  \cap T^\ast U)\times G:  \,  g\cdot (x,\xi)=(x,\xi)},
\end{split}
\end{align}
  where $
  \Xi=\mathbb{J}^{-1}(0)
$
is the zero level of the momentum  map. If  $G$ acts on $M$ only with one orbit type, the critical set of the phase function $\Phi(x,\xi,g)$ is clean. In this case, the stationary phase method can directly be applied  to  yield an asymptotic expansion of the integrals $I(\mu)$.

\begin{proposition}
\label{prop:asympexp}
Let $M$ be a connected, closed Riemannian manifold, and $G$  a compact, connected Lie group $G$ of isometries acting on $M$ with one orbit type. 
Consider further an oscillatory integral  $I(\mu) $ of the form  \eqref{eq:int}. We then have the asymptotic expansion
\bqn 
I(\mu)\sim(2\pi \mu)^{ \kappa}\sum_{j=0} ^{ \infty} \mu^j Q_j (\Phi;a)
\eqn
as $\mu \to 0^+$, 
where $\kappa$ denotes the dimension of an orbit of principal type, and  the coefficients $Q_j(\Phi;a)$ can be computed explicitly. In particular, one has
\bqn
Q_0(\Phi;a)= \int _{\mathcal{C}} \frac {a(m)}{|\det \Phi''(m)_{|N_m\mathcal{C}}|^{1/2}} d\sigma_{\mathcal{C}}(m),
\eqn
where $d\sigma_{\mathcal{C}}$ is the induced volume density on the critical set $\mathcal{C}=\Crit(\Phi)$ given by \eqref{eq:16}.
\end{proposition}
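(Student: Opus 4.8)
The plan is to reduce the asymptotic analysis of $I(\mu)$ to an application of the generalized stationary phase theorem, after verifying that the phase function $\Phi(x,\xi,g)$ on the manifold $T^\ast U\times G$ (or rather its restriction to a neighborhood of the critical set) has a clean critical manifold. First I would observe, using the computation \eqref{eq:16}, that under the one-orbit-type hypothesis the critical set $\mathcal{C}=\Crit(\Phi)$ is a smooth submanifold: indeed, $\Xi=\mathbb{J}^{-1}(0)$ is then a smooth submanifold of $T^\ast M$ because $0$ is a regular value of the momentum map away from a single orbit type, and the fixed-point condition $g\cdot(x,\xi)=(x,\xi)$ cuts out a smooth subset because all stabilizers are conjugate to a fixed principal isotropy group $H$. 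One then has to check \emph{cleanness}, i.e.\ that at each $m\in\mathcal{C}$ the tangent space $T_m\mathcal{C}$ coincides with $\ker(\Phi'')_m$, the kernel of the transverse Hessian; equivalently, that the Hessian $\Phi''(m)$ restricted to the normal space $N_m\mathcal{C}$ is nondegenerate. This is where the explicit structure of $\Phi$ as the global analogue of the momentum map, together with the slice theorem for the $G$-action, enters: near a point of $\mathcal{C}$ one uses $G$-invariant (bundle-adapted) coordinates in which the $G$-orbit directions and the fiber directions decouple, and one reads off that the Hessian is block-diagonal with a nondegenerate block transverse to $\Xi$ (coming from the $\xi$-dependence) and a nondegenerate block transverse to the fixed-point set inside $\Xi$ (coming from $\1-g^\ast_x$ on the normal directions to the orbit).

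With cleanness established, the second step is to invoke the stationary phase theorem for clean critical manifolds (in the form of H\"ormander's Theorem 7.7.6, or the version in Combescure–Ralph–Robert / Duistermaat): for an amplitude $a\in\CT$ supported near $\mathcal{C}$ one obtains an asymptotic expansion $I(\mu)\sim (2\pi\mu)^{c/2}\sum_{j\ge0}\mu^j Q_j(\Phi;a)$, where $c$ is the codimension of $\mathcal{C}$ in $T^\ast U\times G$. Here one must identify $c/2$ with $\kappa$, the dimension of a principal orbit: a dimension count gives $\dim\mathcal{C}=\dim(T^\ast U)+\dim G - 2\kappa$, so the codimension is exactly $2\kappa$ and the power of $\mu$ is $\mu^{-\kappa}$ relative to the Lebesgue normalization, matching the stated $(2\pi\mu)^\kappa$. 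Contributions from the region where $a$ is bounded away from $\mathcal{C}$ are $O(\mu^\infty)$ by non-stationary phase and a partition of unity argument, so they do not affect the expansion. The leading coefficient is then the standard clean-stationary-phase formula
\bqn
Q_0(\Phi;a)=\int_{\mathcal{C}} \frac{a(m)}{|\det \Phi''(m)_{|N_m\mathcal{C}}|^{1/2}}\, d\sigma_{\mathcal{C}}(m),
\eqn
with $d\sigma_{\mathcal{C}}$ the Riemannian density on $\mathcal{C}$ induced from the chosen densities on $T^\ast U$ and $G$ (the phase being real-valued, there is no oscillatory prefactor $e^{i\pi\sigma/4}$ surviving in the modulus, only the absolute value of the Hessian determinant appears). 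The higher $Q_j$ are the usual universal polynomials in the derivatives of $a$ and of $\Phi$ transverse to $\mathcal{C}$, which is what is meant by ``can be computed explicitly.''

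The main obstacle I anticipate is the verification of cleanness uniformly along $\mathcal{C}$, and in particular the precise identification of $N_m\mathcal{C}$ and of the Hessian restricted to it. The subtlety is that $\mathcal{C}$ sits inside the product $T^\ast U\times G$ and its tangent space mixes base, fiber, and group directions; one needs to show that the ``obvious'' tangent vectors to $\mathcal{C}$ — infinitesimal deformations along $\Xi$ preserving the fixed-point condition — already exhaust $\ker\Phi''_m$. I would handle this by working in a Bochner/slice chart around an orbit $G\cdot x$, decomposing $T_x M = T_x(Gx)\oplus N_x$, and correspondingly splitting $\eta\in T^\ast_x M$; the condition $g^\ast\xi=\xi$ with $g$ in the (essentially abelian, since we are at a single point) relevant subgroup linearizes nicely, and the Hessian in the fiber variable $\xi$ is literally $\1-g^\ast_x$ on the appropriate block, whose invertibility on $N_m\mathcal{C}$ is exactly the transversality that survives after quotienting by the orbit. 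Once this local normal form is in hand, the global statement follows by a partition of unity subordinate to a cover of $\mathcal{C}$ by such charts, and the expansion coefficients patch together because they are intrinsically defined. A secondary, more bookkeeping-type difficulty is keeping track of the various density normalizations (the canonical symplectic density on $T^\ast U$, the left-invariant density on $G$, and the induced density on $\mathcal{C}$) so that the constant $(2\pi\mu)^\kappa$ and the $|\det|^{1/2}$ in $Q_0$ come out with the correct powers of $2\pi$.
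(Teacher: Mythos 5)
Your overall strategy is the same as the paper's: verify that under the one-orbit-type hypothesis the critical set $\mathcal{C}$ is a clean critical manifold of codimension $2\kappa$, apply the generalized stationary phase theorem for clean critical sets, and read off the leading coefficient. You are in fact more explicit than the paper about \emph{why} cleanness holds (the paper asserts it as ``not hard to see'' and cites the stationary phase theorem from \cite{ramacher10}), and your dimension count identifying the codimension with $2\kappa$ is correct.

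There is, however, one concretely wrong justification. You claim that ``the phase being real-valued, there is no oscillatory prefactor $e^{i\pi\sigma/4}$.'' This is backwards: the Maslov-type factor $e^{i\pi\sigma_{\Phi''}/4}$, where $\sigma_{\Phi''}$ is the signature of the transversal Hessian, appears precisely in the stationary phase formula for \emph{real} phases, and it is present in the general clean version of the theorem (it also appears in the paper's own proof before being dropped in the final statement). The reason it disappears here is not the reality of $\Phi$ but the vanishing of the signature: since $\Phi(x,\xi,g)=(\kappa(gx)-\kappa(x))\cdot\xi$ is linear in $\xi$ and the factor $\kappa(gx)-\kappa(x)$ vanishes on $\mathcal{C}$, the transversal Hessian has a block--off-diagonal (hyperbolic) structure in the directions transverse to $\Xi$ paired with the $\xi$-directions, forcing $\sigma_{\Phi''}=0$ and hence $e^{i\pi\sigma_{\Phi''}/4}=1$. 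Without this observation your formula for $Q_0$ is not justified as written. A second, minor point: if the version of clean stationary phase you invoke gives a remainder $O(\mu^{N})$ rather than $O(\mu^{\kappa+N})$ after extracting the prefactor $(2\pi\mu)^{\kappa}$ (as is the case for the theorem the paper cites), you still need the short bookkeeping argument, using uniform bounds on the $Q_j$, to conclude that the expansion $I(\mu)\sim(2\pi\mu)^{\kappa}\sum_j\mu^jQ_j$ holds in the strict asymptotic sense; the paper carries this out explicitly.
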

\begin{proof}
It is not hard to see that under the  assumption that $G$ acts on $M$ only with one orbit type, $\Phi(x,\xi,g)$ has a clean critical set, meaning that 
\begin{enumerate}
\item[(I)] $\mathcal{C}$ is a smooth submanifold of $M$  of codimension $2\kappa$;
\item[(II)] at each point $x \in \mathcal{C}$, the Hessian $\Phi''(x)$  of $\Phi$ is transversally non-degenerate, i.e. non-degenerate on $T _xM/ T_xC \simeq N_x\mathcal{C}$, where $N_m\mathcal{C}$ denotes the normal space to $\mathcal{C}$ at $x$.
\end{enumerate}
\noindent
The generalized stationary phase theorem \cite{ramacher10}, Theorem 5, then implies that for all $N \in \N$, there exists a constant $C_{N,\Phi,a}>0$ such that
\bqn
\Big |I(\mu) - e^{i\Phi_0/\mu}(2\pi \mu)^{ {\kappa}}\sum_{j=0} ^{N-1} \mu^j Q_j (\Phi;a)\Big | \leq C_{N,\Phi,a} \mu^N,
\eqn
where  $\Phi_0$ is the constant value of $\Phi$ on $\mathcal{C}$. Furthermore, the $Q_j(\Phi;a)$ can be computed explicitly, and  for each $j$ there exists a constant $\tilde C_{j,\Phi,a}>0$ such that 
\bqn
|Q_j(\Phi;a)|\leq \tilde C_{j,\Phi,a}.
\eqn
In particular,
\bqn
Q_0(\Phi;a)= \int _{\mathcal{C}} \frac {a(m)}{|\det \Phi''(m)_{|N_m\mathcal{C}}|^{1/2}} d\sigma_{\mathcal{C}}(m) e^{ i \frac\pi 4 \sigma_{\Phi''}},
\eqn
where $d\sigma_{\mathcal{C}}$ is the induced volume density on $\mathcal{C}$, and 
 $\sigma_{\Phi''}$  the constant value of the signature of the transversal Hessian $\Phi''(m)_{|N_m\mathcal{C}}$ on $\mathcal{C}$. Since $\Phi_0=0$, one  computes for arbitrary $ N \in \N$
 \begin{align*} 
\Big  |I(\mu) &- (2\pi \mu)^{ \kappa}\sum_{j=0} ^{ N-1} \mu^j Q_j (\Phi;a)\Big |\leq \Big  |I(\mu) - (2\pi \mu)^{ \kappa}\sum_{j=0} ^{ \kappa +N-1} \mu^j Q_j (\Phi;a)\Big |+\Big  | (2\pi \mu)^{ \kappa}\sum_{j=N} ^{\kappa+  N-1} \mu^j Q_j (\Phi;a)\Big |\\
&\leq   C_{\kappa +N,\Phi,a} \mu^{\kappa+N} + (2 \pi \mu)^\kappa \sum_{j=N}^{\kappa + N -1} \mu^j \tilde C_{j,\Phi,a}=O(\mu^{\kappa+N}), 
 \end{align*}
yielding the proposition.
\end{proof}

In general, the major difficulty resides in the fact that, unless the $G$-action on $T^\ast M$ is free, the considered momentum map is not a submersion, so that $\Xi$ and  $\mathcal{C}=\Crit(\Phi)$ are not  smooth manifolds. The stationary phase theorem can therefore not immediately be applied to the integrals  $I(\mu)$. Nevertheless, it was shown in \cite{ramacher10} that by resolving the singularities of the critical set $\mathcal{C}$, and applying the stationary phase theorem in a suitable resolution space, an asymptotic description of $I(\mu)$ can be obtained. More precisely, one has the following
\begin{theorem} 
\label{thm:I(mu)}
Let $M$ be a connected, closed Riemannian manifold, and $G$  a compact, connected Lie group $G$ acting isometrically and effectively on $M$. Consider the oscillatory integral
\begin{align*}
I(\mu)
&= \int _{T^\ast U}  \int_{G} e^{i  \Phi(x , \xi,g)/\mu }   a( g  x,  x , \xi,g)d_G(g) \d(T^\ast U)(  x,  \xi),  \qquad \mu \to 0^+,  
\end{align*}
where $(\kappa,U)$ are local coordinates on $M$,  $ \d(T^\ast U)(  x,  \xi)$ is the canonical volume density on $T^\ast U$, and $d_G(g)$ the volume density  on $G$ with respect to some left invariant metric on $G$, while $a \in \CT(U \times T^\ast  U\times G)$ is an amplitude, and
$
\Phi(x, \xi, g) =(\kappa(g x) - \kappa ( x)) \cdot  \xi
$.  
Then $I(\mu)$ has the asymptotic expansion 
\bqn 
I(\mu) = (2\pi \mu)^{\kappa} \mathcal{L}_0 + O\big (\mu^{\kappa+1}(\log \mu^{-1})^{\Lambda-1}\big ),  \qquad \mu \to 0^+.
\eqn
Here $\kappa$ is the dimension of an orbit of principal type in $M$, $\Lambda$ the maximal number of elements of a totally ordered subset of the set of isotropy types, and the leading coefficient is given by 
\bq
\label{eq:L0}
\mathcal{L}_0=\int_{\mathrm{Reg}\, \mathcal{C}} \frac { a( g  x,  x , \xi,g) }{|\det   \, \Phi''(x, \xi,g)_{N_{(x, \xi, g)}\mathrm{Reg}\, \mathcal{C}}|^{1/2}} \d(\mathrm{Reg}\, \mathcal{C})(x,\xi,g),
\eq
where $\mathrm{Reg}\, \mathcal{C}$ denotes the regular part of  $\mathcal{C}=\mklm{(x,\xi,g) \in \Xi \times G: g \cdot (x,\xi) =(x,\xi)}$, and $\d(\mathrm{Reg}\, \mathcal{C})$ the induced volume density. 
In particular, the integral over $\mathrm{Reg}\, \mathcal{C}$ exists.
\end{theorem}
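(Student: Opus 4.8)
The plan is to reduce the statement to the general equivariant stationary phase result with resolution of singularities proved in \cite{ramacher10}, which is precisely designed for phase functions of the type $\Phi(x,\xi,g)=(\kappa(gx)-\kappa(x))\cdot\xi$. The first step is to recall from \eqref{eq:16} that $\Crit(\Phi)=\mathcal{C}=\{(x,\xi,g)\in(\Xi\cap T^\ast U)\times G: g\cdot(x,\xi)=(x,\xi)\}$, so that the critical set coincides with the fixed-point set of the lifted $G$-action restricted to the zero level $\Xi$ of the momentum map. The geometry of this set is governed by the orbit type stratification of $M$ (equivalently of $T^\ast M$): on the principal stratum the action on $\Xi$ is locally free modulo a finite principal isotropy group, and there $\mathcal{C}$ is a smooth manifold of the expected codimension $2\kappa$ with transversally nondegenerate Hessian, exactly as in the clean case of Proposition~\ref{prop:asympexp}; on the lower strata $\Xi$ and $\mathcal{C}$ acquire singularities of a controlled, iterated-conic nature.

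The core step is the resolution of singularities. Following \cite{ramacher10}, one constructs a sequence of blow-ups along the (suitably ordered) isotropy-type submanifolds, producing a resolution space $\widetilde{X}$ with a proper map $\zeta:\widetilde{X}\to T^\ast U\times G$ such that the pulled-back phase $\widetilde\Phi=\Phi\circ\zeta$, after factoring out the exceptional-divisor monomials, has a \emph{clean} critical set in each chart of $\widetilde{X}$. The number of blow-ups needed is bounded by $\Lambda$, the length of a maximal chain of isotropy types, which is where the factor $(\log\mu^{-1})^{\Lambda-1}$ in the remainder originates: each blow-up contributes at most one logarithmic power through the radial integration of the exceptional variable near the boundary divisor. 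On $\widetilde{X}$ one applies the generalized stationary phase theorem (\cite{ramacher10}, Theorem~5, already invoked in the proof of Proposition~\ref{prop:asympexp}) chart by chart, using the partition of unity, to obtain $I(\mu)=(2\pi\mu)^\kappa \mathcal{L}_0 + O(\mu^{\kappa+1}(\log\mu^{-1})^{\Lambda-1})$, with $\mathcal{L}_0$ expressed as an integral over the critical set upstairs.

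The final step is to push $\mathcal{L}_0$ back down to $\mathrm{Reg}\,\mathcal{C}$. Because $\zeta$ restricts to a diffeomorphism over the principal stratum, and because the singular strata of $\mathcal{C}$ have strictly smaller dimension, the contribution of the exceptional divisors to the leading term vanishes and $\mathcal{L}_0$ collapses to the integral \eqref{eq:L0} over $\mathrm{Reg}\,\mathcal{C}$ against the induced volume density, with $\Phi''$ the transversal Hessian on the normal space $N_{(x,\xi,g)}\mathrm{Reg}\,\mathcal{C}$; the convergence of this integral over the (noncompact-looking but finite-volume) regular part is a byproduct of the fact that it arises as a limit of the manifestly finite integrals over the resolution, so the last assertion follows for free. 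One should also check that the amplitude $a\in\CT(U\times T^\ast U\times G)$ has compact support so that all integrals converge and the blow-up construction stays in a compact region, and that the substitution $g=\e{-tX}$ computation preceding \eqref{eq:16} correctly identifies $\Phi$ with the momentum map so that $\Xi$ really is the relevant level set.

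I expect the main obstacle to be the resolution-of-singularities step: verifying that the iterated blow-up along isotropy strata genuinely cleans the critical set of $\Phi$ in every chart, keeping track of the Jacobian factors and the exceptional-divisor monomials through the stationary phase expansion, and extracting the sharp logarithmic bound $(\log\mu^{-1})^{\Lambda-1}$ on the remainder. This is precisely the content of the technical heart of \cite{ramacher10}, so in the present paper it is legitimate to quote it; the only work that remains here is to check that the amplitude and phase arising from the heat-kernel approximation satisfy the hypotheses of that theorem, which they do since $a$ is smooth and compactly supported and $\Phi$ is the standard momentum-type phase.
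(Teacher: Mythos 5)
Your proposal is correct and takes essentially the same route as the paper, whose entire proof of this theorem is the citation of \cite{ramacher10}, Theorem 11. Your sketch of the underlying argument (identification of $\Crit(\Phi)$ with the fixed-point set over $\Xi$, iterated blow-ups along isotropy strata, chartwise stationary phase in the resolution space, and collapse of the leading term onto $\mathrm{Reg}\,\mathcal{C}$) accurately reflects the strategy the paper itself attributes to that reference in the preamble of Section \ref{sec:SEARS}.
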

\begin{proof}
See \cite{ramacher10}, Theorem 11. 
\end{proof}

\section{A local formula for $\mathcal{L}_\rho(T)$}
\label{sec:LF}

We are now able to derive a local formula for $\mathcal{L}_\rho(T)$. Let us begin with the non-singular case.

\begin{proposition}
Let $M$ be a connected, closed Riemannian manifold, and $G$  a compact, connected Lie group $G$ of isometries acting on $M$ with one orbit type. Take  $\rho\in \hat G$, and let   $\Lcal_\rho(T)$ be the $\rho$-equivariant Lefschetz number defined in \eqref{eq:d}. Then
\begin{align*}
\begin{split}
\Lcal_\rho(T)=\frac {(2\pi)^{\kappa-n}} {\vol G} \sum_{j=0}^N \sum_{k+q=n-\kappa} \sum_\gamma   (-1)^j Q_q(\Phi;a_{j,k,\gamma} ),
 \end{split}
\end{align*}
where the coefficients $Q_q(\Phi;a_{j,k,\gamma} )$ can be computed explicitly.
\end{proposition}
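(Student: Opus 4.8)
The plan is to combine Theorem~\ref{thm:3} with the clean-critical-point asymptotics of Proposition~\ref{prop:asympexp}, and then extract the $t$-independent term. First I would start from the expansion in \eqref{eq:local}, set $\mu = t^{1/m}$, and recognize that, for each fixed $j$, $k$, and chart index $\gamma$, the inner double integral over $M$ and the fibre variable $\eta$, together with the integration over $G$ against $\overline{\rho(g)}$, is exactly an oscillatory integral of the form \eqref{eq:int} with phase $\Phi_\gamma(x,\eta,g) = (\kappa_\gamma(gx)-\kappa_\gamma(x))\cdot\eta$ and amplitude
\bqn
a_{j,k,\gamma}(gx,x,\eta,g) = \bar f_\gamma(x)\, f_\gamma(gx)\, \tr\Big[\Phi_j(g)_{gx}\circ (\phi_{E_j}^\gamma)^{-1}_{gx}\circ e_k^\gamma(1,\kappa_\gamma(gx),\eta,\Delta_j)\circ (\phi_{E_j}^\gamma)_x\Big]\,\overline{\rho(g)}.
\eqn
Here one must check that $a_{j,k,\gamma}$ is genuinely a compactly supported smooth amplitude: compact support follows from $\bar f_\gamma \in \CT(U^\gamma)$, and decay in $\eta$ together with smoothness follows from the symbol estimates for $e_k^\gamma$, which comes from the Cauchy-integral representation of $e_k^\gamma(1,\cdot,\cdot,\Delta_j)$ over $\Gamma$ and the classical resolvent symbol bounds (Lemma on $\L^l$-calculus cited from Gilkey). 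A minor point is that the amplitude in \eqref{eq:int} is required to be $\CT$ in all variables whereas here $e_k^\gamma$ is only Schwartz-type in $\eta$; this is handled exactly as in \cite{ramacher10} by the standard cut-off/decay argument, or one simply observes that the stationary phase estimates in Theorem~5 of \cite{ramacher10} only require the symbol-type bounds, not compact support in $\eta$.

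Next I would apply Proposition~\ref{prop:asympexp} to each such $I_\gamma(\mu)$ with $\mu = t^{1/m}$, obtaining
\bqn
I_\gamma(t^{1/m}) \sim (2\pi)^\kappa t^{\kappa/m}\sum_{q=0}^\infty t^{q/m} Q_q(\Phi_\gamma; a_{j,k,\gamma}).
\eqn
Multiplying by the prefactor $t^{(k-n)/m}$ from \eqref{eq:local} and summing over $k$, the contribution of the term $(j,k,\gamma)$ to $\Lcal_\rho(T)$ carries the power $t^{(k-n+\kappa+q)/m}$. Since the left-hand side $\Lcal_\rho(T)$ is independent of $t$, only the terms with $k-n+\kappa+q = 0$, i.e. $k+q = n-\kappa$, survive in the constant term, and all powers $t^{(k+q-n+\kappa)/m}$ with $k+q > n-\kappa$ must cancel after summing over all $j,k,\gamma$ (a posteriori; we only need the constant term). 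Collecting the $t^0$-coefficient and including the factor $\frac{1}{\vol G}$ and the signs $(-1)^j$ from Lemma~\ref{lemma:1}, this yields precisely
\bqn
\Lcal_\rho(T) = \frac{(2\pi)^{\kappa-n}}{\vol G}\sum_{j=0}^N\sum_{k+q=n-\kappa}\sum_\gamma (-1)^j Q_q(\Phi; a_{j,k,\gamma}),
\eqn
with each $Q_q$ explicitly computable via the formulas in Proposition~\ref{prop:asympexp}.

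One subtlety I would be careful about is the uniformity of the error term: Theorem~\ref{thm:3} gives an $O(t^l)$ remainder only for $K \geq K(l)$, and Proposition~\ref{prop:asympexp} produces, for the truncated symbol sum up to $K$, an asymptotic expansion with remainder $O(\mu^{\kappa+N})$ whose constant depends on $K$. To conclude rigorously I would fix $l$ large, choose $K = K(l)$, apply Proposition~\ref{prop:asympexp} with $N$ large enough that $(\kappa+N)/m > l$, and then let $t\to 0^+$: the $t$-independence of $\Lcal_\rho(T)$ forces the coefficient of $t^0$ to be as claimed and all other coefficients of non-negative powers of $t^{1/m}$ below order $l$ to vanish. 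The main obstacle is thus not any single hard estimate but rather bookkeeping: verifying that the amplitude $a_{j,k,\gamma}$ satisfies the hypotheses of the stationary phase theorem (smoothness and the requisite $\eta$-decay coming from the resolvent parametrix), and matching the powers of $t^{1/m}$ correctly so that the constant term is isolated cleanly — the same mechanism by which heat-trace asymptotics produce the index in the classical case.
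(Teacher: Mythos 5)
Your proposal is correct and follows essentially the same route as the paper: combine Theorem~\ref{thm:3} with Proposition~\ref{prop:asympexp}, substitute $\mu=t^{1/m}$, recognize the amplitudes $a_{j,k,\gamma}$ from \eqref{eq:18}, and extract the $t$-independent coefficient, which forces $k+q=n-\kappa$. The extra care you take with the $\eta$-decay of the symbols $e_k^\gamma$ and with the uniformity of the remainders in $K$, $L$ and $N$ is sound and only makes explicit what the paper leaves implicit.
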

\begin{proof}
As an immediate consequence of Theorem \ref{thm:3} and Proposition \ref{prop:asympexp}, for any $l$ and $L \in \N$ there exists a $K(l)$ such that  for all $K \geq K(l)$ one has the expansion
\begin{align*}
\begin{split}
\Lcal_\rho(T)&= \frac {1} {\vol G} \,  t^{\frac{\kappa-n}{m}}\sum_{j=0}^N (-1)^j   \sum_{k=0}^K t^{\frac {k} m} \sum_\gamma  \Big [ (2\pi)^{\kappa-n} \sum_{q=0}^{L-1} t^{\frac qm} Q_q(\Phi;a_{j,k,\gamma}) + O(t^{\frac Lm}) \Big ] +O(t^l)
\end{split}
\end{align*}
for any $t>0$, where 
\bq
\label{eq:18}
a_{j,k,\gamma}(x,\eta,g)= \bar f_\gamma(x) f_\gamma(gx)   \cdot \tr \Big [ \Phi_j(g)_{gx} \circ (\phi_{E_j}^\gamma)^{-1}_{gx} \circ e_k^\gamma(1,\kappa_\gamma(gx),\eta,\Delta_j)  \circ (\phi_{E_j}^\gamma)_x \Big ] \cdot \overline{\rho(g)}.
\eq
Choose  $l,L > n-\kappa$. Since $\mathcal{L}_\rho(T)$ is independent of $t$, it is equal to the constant term in this expansion, while all other  terms  must vanish. The assertion now follows.
\end{proof}

We come now to the general case and to the proof of the  main result. 

\begin{proof}[Proof of Theorem \ref{thm:main}]
By Theorems \ref{thm:3} and \ref{thm:I(mu)}, for any $l$ there exists a $K(l)$ such that 
\begin{align}
\label{eq:19}
\begin{split}
\Lcal_\rho(T)&= \frac {1} {\vol G} \,  t^{\frac{\kappa-n}{m}}\sum_{j=0}^N (-1)^j   \sum_{k=0}^K t^{\frac {k} m} \sum_\gamma  \Big [ (2\pi)^{\kappa-n} \mathcal{L}_{j,k,\gamma} + O(t^{\frac 1m}(\log t^{-1})^{\Lambda-1}) \Big ]
\end{split}
\end{align}
up to terms of order $O(t^l)$ for all $K \geq K(l)$ and any $t>0$, where 
\bqn
\mathcal{L}_{j,k,\gamma}=\int_{\mathrm{Reg}\, \mathcal{C}} \frac {a_{j,k,\gamma}(x,\eta,g) }{|\det   \, \Phi_\gamma''(x, \eta,g)_{N_{(x, \eta, g)}\mathrm{Reg}\, \mathcal{C}}|^{1/2}} \d(\mathrm{Reg}\, \mathcal{C})(x,\eta,g),
\eqn
$\Phi_\gamma(x,\eta,g)=(\kappa_\gamma(gx) - \kappa_\gamma(x))\cdot \eta$, and $a_{j,k,\gamma}(x,\eta,g)$ restricted to $\mathcal{C}$ is given by
\bqn 
a_{j,k,\gamma}(x,\eta,g)=  f_\gamma(x)   \cdot \tr \Big [ \Phi_j(g)_{x} \circ (\phi_{E_j}^\gamma)^{-1}_{x} \circ e_k^\gamma(1,\kappa_\gamma(x),\eta,\Delta_j)  \circ (\phi_{E_j}^\gamma)_x \Big ] \cdot \overline{\rho(g)}.
\eqn
Note that at a fixed point $x$, $\Phi_j(g)_x$ is an endomorphism of $E_{j,x}$, so that the above trace is well defined. Choose $l > n-\kappa$. Since $\mathcal{L}_\rho(T)$ must be equal to the constant term in the expansion \eqref{eq:19}, one finally obtains  the equality
\begin{align*}
\Lcal_\rho(T)&= \frac { (2\pi)^{\kappa-n}} {\vol G} \sum_{j=0}^N (-1)^j    \sum_\gamma  \Big [ \mathcal{L}_{j,n-\kappa,\gamma} + \mathcal{R}_{j,\gamma} \Big ],
\end{align*}
where the remainder terms $\mathcal{R}_{j,\gamma}$ do depend on  amplitudes $a_{j,k,\gamma}$ with $0 \leq k \leq n-\kappa-1$. This proves Assertion (1) of Theorem \ref{thm:main}.  Assume now that $\Phi_j(g)_x$ acts trivially on $E_{j,x}$, and recall that  for any  smooth, compactly supported function $\alpha$ on $\Xi  \cap T^\ast U^\gamma$ one has the formula
\bqn
\int_{{\mathrm{Reg}} \, {\mathcal{C}}}\frac{\overline {\rho(g)}  \alpha(x,\eta)}{|\det  \, \Phi_\gamma'' (x,\eta,g)_{|N_{(x, \eta,g)}{\mathrm{Reg}} \, {\mathcal{C}}_\gamma} |^{1/2}} d({\mathrm{Reg}}  \, {\mathcal{C}})(x, \eta,g)
=[{\pi_\rho}_{|H}:1]\int_{{\mathrm{Reg}} \, \Xi} \alpha (x, \eta) \frac{d({\mathrm{Reg}}\, \Xi)(x, \eta)}{\mbox{vol }\mathcal{O}_{(x,\eta)}},
\eqn
where $H$ is a principal isotropy group, and $[ \pi_{\rho|H}: 1]$ denotes the multiplicity of the trivial representation in the restriction of $\pi_\rho$ to $H$,  while $\mathcal{O}_{(x,\eta)}$ is the orbit in $T^\ast M$ through $(x,\eta)$, compare \cite{cassanas-ramacher09}, Lemma 7. In this case,  
\bqn 
\mathcal{L}_{j,k,\gamma}=[{\pi_\rho}_{|H}:1]\int_{{\mathrm{Reg}} \, \Xi}  f_\gamma(x)   \cdot \tr \Big [  (\phi_{E_j}^\gamma)^{-1}_{x} \circ e_k^\gamma(1,\kappa_\gamma(x),\eta,\Delta_j)  \circ (\phi_{E_j}^\gamma)_x \Big ]  \frac{d({\mathrm{Reg}}\, \Xi)(x, \eta)}{\mbox{vol }\mathcal{O}_{(x,\eta)}},
\eqn
and we obtain Assertion (2) of Theorem \ref{thm:main}.
\end{proof}

\section{Outlook}
\label{sec:O}

A few years after the index theorem was proved by heat equation methods, the same techniques were employed to derive generalized Lefschetz fixed point formulae. Thus, for fixed $g \in G$, asymptotic expansions for $\tr_{\L^2} \big (T_j(g) e^{-t\Delta_j}\big )$ were obtained by Gilkey and Lee, see \cite{gilkey95}, Lemma 1.10.2, and also by Donnelly \cite{donnelly76}.  As they showed,
\bq
\label{eq:20}
\tr_{\L^2} \big (T_j(g) e^{-t\Delta_j}\big )\sim \sum_i \sum_k  t^{\frac{k-n_{g,i}}m} \int_{N_{g,i}} a_k(x,\Delta_j, T_j(g)) dN_{g,i}(x),
\eq
where the $N_{g,i}$ are the connected components  of dimension $n_{g,i}$ of the fixed point set of $g:M \rightarrow M$, and the $a_k(x,\Delta_j,T_j(g))$  are scalar invariants depending functorially on the symbol of $\Delta_j$ and on $T_j(g)$. The existence of such expansions strongly suggested new proofs of the Atiyah--Singer--Lefschetz fixed point formulae for compact group actions. 
 In the case of isolated fixed points, Kotake gave an expansion of $\tr_{\L^2} \big (T_j(g) e^{-t\Delta_j}\big )$ in \cite{kotake69}, which was sufficient to  give a new proof of Theorem \ref{thm:AB} by heat equation methods. In general, as a consequence of  the expansion \eqref{eq:20} and  Lemma \ref{lemma:1}, one has the local formulae
\bqn 
{L}(T(g))=\sum_i \sum_{j=0}^N (-1)^j \int_{N_{g,i}} a_{n_i}(x,\Delta_j,T_j(g)) dN_{g,i}(x),
\eqn
and using invariance theory, the terms in these formulae can be identified as characteristic classes. In this way, Donnelly-Patodi \cite{donnelly-patodi77} and Kawasaki \cite{kawasaki78} gave a new proof of the $G$-signature theorem of Atiyah--Singer in the case of the signature complex, while the other classical complexes were treated in Gilkey \cite{gilkey79}. In the same way, Theorem \ref{thm:main} suggests that it should be possible to find global expressions for  $\Lcal_\rho(T)$  in terms of characteristic classes  using invariance theory. This will be pursued in a subsequent paper, and should lead to  topological formulae relating characteristic classes of fixed point sets on $M$ to characteristic classes of the symplectic quotient $\Xi/G$.


\providecommand{\bysame}{\leavevmode\hbox to3em{\hrulefill}\thinspace}
\providecommand{\MR}{\relax\ifhmode\unskip\space\fi MR }
\providecommand{\MRhref}[2]{%
  \href{http://www.ams.org/mathscinet-getitem?mr=#1}{#2}
}
\providecommand{\href}[2]{#2}


\end{document}